\titlespacing{\section}{0cm}{3.5pc}{1.5pc}
\def\@citex[#1]#2{\if@filesw\immediate\write\@auxout{\string\citation{#2}}\fi
  \def\@citea{}\@cite{\@for\@citeb:=#2\do
    {\@citea\def\@citea{\@citesep}\@ifundefined
       {b@\@citeb}{{\bf ?}\@warning
       {Citation `\@citeb' on page \thepage \space undefined}}%
{\csname b@\@citeb\endcsname}}}{#1}}
\def\@citesep{; }
\newtheoremstyle{Kang}{}{}{\itshape}{}{\bf}{}{.5em}{}
\theoremstyle{Kang}
\newtheorem{theorem}{Theorem}[section]
\newtheorem{lemma}[theorem]{Lemma}
\newtheorem{prop}[theorem]{Proposition}
\newtheoremstyle{Kremark}{}{}{}{}{\bf}{}{.5em}{}
\theoremstyle{Kremark}
\newtheorem*{remark}{Remark.}
\newtheorem{defn}[theorem]{Definition}
\newtheorem{other}{}
\def\bm#1{\mathbbm{#1}}
\def\roof#1{\mathop{\ulcorner\mkern-1.5mu#1\mkern-1.5mu\urcorner}}
\DeclareMathOperator{\br}{Br}
\DeclareMathOperator{\dec}{dec}
\title{Degree Three Unramified Cohomology Groups}
\author{Akinari Hoshi$^{(1)}$, Ming-chang Kang$^{(2)}$ and Aiichi Yamasaki$^{(3)}$ \\[3mm]
\begin{minipage}{16cm} \begin{description} \itemsep=-1pt
\item[] $^{(1)}$Department of Mathematics, Niigata University,
Niigata, Japan,\\ E-mail: hoshi@math.sc.niigata-u.ac.jp \item[]
$^{(2)}$Department of Mathematics, National Taiwan University,
Taipei,\\ Taiwan, E-mail: kang@math.ntu.edu.tw \item[]
$^{(3)}$Department of Mathematics, Kyoto University, Kyoto,
Japan,\\ E-mail: aiichi.yamasaki@gmail.com
\end{description} \end{minipage}}
\date{}
\begin{document}

\maketitle

\footnote{\textit{\!\!\! $2010$ Mathematics Subject
Classification}. 14E08, 12G05.} \footnote{\textit{\!\!\! Keywords
and phrases}. Rationality problem, Noether's problem, unramified
cohomology group, unramified Brauer groups, $p$-groups.}
\footnote{\!\!\! This work was partially supported by JSPS KAKENHI
Grant Numbers 24540019, 25400027.}

\footnote{\!\!\! Parts of the work were finished when the
first-named author and the third-named author were visiting the
National Center for Theoretic Sciences (Taipei Office), whose
support is gratefully acknowledged.}

\begin{abstract}
{\noindent\bf Abstract.} Let $k$ be any field, $G$ be a finite
group. Let $G$ act on the rational function field $k(x_g:g\in G)$
by $k$-automorphisms defined by $h\cdot x_g=x_{hg}$ for any
$g,h\in G$. Denote by $k(G)=k(x_g:g\in G)^G$, the fixed subfield.
Noether's problem asks whether $k(G)$ is rational (= purely
transcendental) over $k$. The unramified Brauer group $\br_{\rm
nr}(\bm{C}(G))$ and the unramified cohomology $H_{\rm nr}^3
(\bm{C}(G),\bm{Q}/\bm{Z})$ are obstructions to the rationality of
$\bm{C}(G)$ (see \cite{Sa2} and \cite{CTO}). Peyre proves that, if
$p$ is an odd prime number, then there is a group $G$ such that
$|G|=p^{12}$, $\br_{\rm nr}(\bm{C}(G))=\{0 \}$, but $H_{\rm
nr}^3(\bm{C}(G),\bm{Q}/\bm{Z})\ne \{0 \}$; thus $\bm{C}(G)$ is not
stably $\bm{C}$-rational \cite{Pe2}. Using Peyre's method, we are
able to find groups $G$ with $|G|=p^9$ where $p$ is an odd prime
number such that $\br_{\rm nr}(\bm{C}(G))=\{0 \}$, $H_{\rm
nr}^3(\bm{C}(G),\bm{Q}/\bm{Z})\ne \{0 \}$.
\end{abstract}

\newpage
\section{Introduction}

Let $k$ be a field, and $L$ be a finitely generated field extension of $k$.
$L$ is called $k$-rational (or rational over $k$) if $L$ is purely transcendental over $k$,
i.e.\ $L$ is isomorphic to some rational function field over $k$.
$L$ is called stably $k$-rational if $L(y_1,\ldots,y_m)$ is $k$-rational for some $y_1,\ldots, y_m$ which are algebraically independent over $L$.
$L$ is called $k$-unirational if $L$ is $k$-isomorphic to a subfield of some $k$-rational field extension of $k$.
It is easy to see that ``$k$-rational" $\Rightarrow$ ``stably $k$-rational" $\Rightarrow$ ``$k$-unirational".

A classical question, the L\"uroth problem by some people, asks whether a $k$-unirational field $L$ is necessarily $k$-rational.
For a survey of the question, see \cite{MT} and \cite{CTS}.

Noether's problem is a special case of the above L\"uroth problem.
Let $k$ be a field and $G$ be a finite group. Let $G$ act on the
rational function field $k(x_g:g\in G)$ by $k$-automorphisms
defined by $h\cdot x_g=x_{hg}$ for any $g,h\in G$. Denote by
$k(G)$ the fixed subfield, i.e.\ $k(G)=k(x_g:g\in G)^G$. Noether's
problem asks, under what situation, the field $k(G)$ is
$k$-rational.

Noether's problem is related to the inverse Galois problem, to the
existence of generic $G$-Galois extensions over $k$, and to the
existence of versal $G$-torsors over $k$-rational field extensions
\cite{Sw}, \cite{Sa1}, \cite[Section 33.1, page 86]{GMS}.

The first counter-example to Noether's problem was constructed by
Swan: $\bm{Q}(C_p)$ is not $\bm{Q}$-rational if $p=47$, 113 or 233
etc.\ where $C_p$ is the cyclic group of order $p$. Noether's
problem for finite abelian groups was studied extensively by Swan,
Voskresenskii, Endo and Miyata, Lenstra, etc. For details, see
Swan's survey paper \cite{Sw}.

In \cite{Sa2}, Saltman defines $\br_{{\rm nr},k}(k(G))$, the
unramified Brauer group of $k(G)$ over $k$. It is known that, if
$k(G)$ is stably $k$-rational, then the natural map $\br(k)\to
\br_{{\rm nr},k}(k(G))$ is an isomorphism; in particular, if $k$
is algebraically closed, then $\br_{{\rm nr},k}(k(G))\allowbreak
=\{0\}$.

In this article, we concentrate on field extensions $L$ over
$\bm{C}$. Thus we will write $\br_{\rm nr}(\bm{C}(G))$ for
$\br_{{\rm nr},\bm{C}}(\bm{C}(G))$, because there is no ambiguity
of the ground field $\bm{C}$. As mentioned before, if $\br_{\rm
nr}(\bm{C}(G))\ne \{0\}$, then $\bm{C}(G)$ is not stably rational
over $\bm{C}$.

\begin{theorem}[{Saltman \cite{Sa2}}] \label{t1.1}
Let $p$ be any prime number. Then there is a group $G$ of order
$p^9$ such that $\br_{\rm nr}(\bm{C}(G))\ne \{0\}$. Consequently
$\bm{C}(G)$ is not stably $\bm{C}$-rational.
\end{theorem}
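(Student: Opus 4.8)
The plan is to realize the non-vanishing of the unramified Brauer group through the Bogomolov--Saltman formula, which expresses $\br_{\rm nr}(\bm{C}(G))$ for a $p$-group $G$ as the group $B_0(G)$ of those cohomology classes in $H^2(G,\bm{Q}/\bm{Z})$ whose restriction to every bicyclic subgroup of $G$ vanishes. So the first step is to reduce the problem to an explicit group-cohomology computation: find a $p$-group $G$ of order $p^9$ for which $B_0(G)\neq\{0\}$, equivalently for which there is a non-trivial class in $H^2(G,\bm{Q}/\bm{Z})$ that dies on all abelian (indeed all bicyclic) subgroups.

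Second, I would produce a candidate $G$. The natural source is a central extension $1\to Z\to G\to V\to 1$ with $Z$ and $V$ elementary abelian $p$-groups, since for such groups the whole of $H^2$ and the restriction maps to bicyclic subgroups can be described in terms of alternating and symmetric bilinear/linear-algebra data over $\bm{F}_p$. Concretely one wants the commutator pairing $V\times V\to Z$ together with the $p$-th power map to be chosen so that (i) $Z$ is exactly the commutator-plus-$p$-th-power subgroup (so no ``trivial'' classes are introduced), and (ii) there is a class in $H^2(V,\bm{Z})$-coming data on which every isotropic line pairs trivially. Carefully optimizing the dimensions of $V$ and $Z$ against these constraints is what forces the order up to $p^9$; I would set $\dim V$ and $\dim Z$ so that the relevant space of ``decomposable'' alternating forms has smaller codimension than the space we are intersecting against, guaranteeing a surviving class.

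Third, having fixed the presentation, I would verify $B_0(G)\neq\{0\}$ directly: list the generators and relations, compute $[G,G]$ and $G^p[G,G]$ to pin down $H_1(G,\bm{Z})$ and hence (by universal coefficients and the Schur multiplier) get a handle on $H^2(G,\bm{Q}/\bm{Z})$, exhibit a specific non-trivial class $\alpha$, and then check that $\fn{res}^G_A(\alpha)=0$ for every bicyclic subgroup $A\le G$. For a central extension of elementary abelian groups this last check reduces to a finite linear-algebra verification over $\bm{F}_p$ (the restriction of $\alpha$ to the subgroup generated by two elements is governed by the value of the associated form on the corresponding pair of vectors), so it is mechanical once the form is written down. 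Finally, invoking the fact quoted in the introduction — that $\br_{\rm nr}(\bm{C}(G))\neq\{0\}$ obstructs stable $\bm{C}$-rationality — gives the ``Consequently'' clause.

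The main obstacle I expect is step two: arranging the commutator/power structure on a $9$-dimensional $\bm{F}_p$-object so that a class in $H^2$ genuinely survives restriction to \emph{all} bicyclic subgroups while keeping $|G|=p^9$ and not accidentally making the putative class decomposable or trivial. This is a delicate extremal problem in the linear algebra of alternating forms (closely related to Bogomolov's analysis and to the $p^6$ examples of Saltman and Bogomolov in the $p$-th-power-free case), and getting the dimension count to close at exactly $p^9$ — rather than something larger — is the crux; the cohomological bookkeeping around it is comparatively routine.
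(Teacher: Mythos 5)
The paper does not actually prove this statement: Theorem \ref{t1.1} is quoted from Saltman's paper \cite{Sa2} and used only as background, so there is no internal proof to measure you against. Judged on its own terms, your proposal is a reasonable high-level strategy --- reduce $\br_{\rm nr}(\bm{C}(G))$ to the group $B_0(G)$ of classes in $H^2(G,\bm{Q}/\bm{Z})$ killed by restriction to every bicyclic subgroup, and look for a central extension of elementary abelian $p$-groups carrying a surviving class. Two caveats, though: the bicyclic-subgroup formula you invoke is Bogomolov's \cite{Bo}, which postdates Saltman's theorem and is sharper than the criterion Saltman actually used (restriction to abelian subgroups as a sufficient condition for unramifiedness); and with Bogomolov's formula in hand one already gets examples of order $p^6$ (Theorem \ref{t1.2}), so nothing ``forces the order up to $p^9$'' --- $p^9$ is simply what Saltman's particular construction yields.

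The genuine gap is that the theorem is an existence statement and you never produce the object whose existence is asserted. Steps two and three of your plan --- write down the commutator pairing $\wedge^2 V\to Z$, exhibit a specific nonzero class $\alpha$, and check that it dies on every bicyclic (or abelian) subgroup --- are precisely the content of the theorem, and you defer all of them, explicitly flagging the construction as ``the crux'' and ``the main obstacle.'' Nothing in the proposal guarantees that the ``delicate extremal problem'' you describe has a solution at order $p^9$; asserting that a dimension count closes without carrying it out is not a proof. To complete the argument you would need either to reproduce an explicit group of order $p^9$ with specified commutator relations together with the verification on bicyclic subgroups --- exactly the kind of finite linear-algebra computation over $\bm{F}_p$ that the present paper carries out in Section 2 for its own degree-three examples --- or simply to cite \cite{Sa2} (or \cite{Bo}) for the construction, which is what the paper itself does.
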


A convenient formula for computing $\br_{nr}(\bm{C}(G))$ was found
by Bogomolov (\cite[Theorem 3.1]{Bo}). Using this formula,
Bogomolov was able to reduce the group order from $p^9$ to $p^6$.

\begin{theorem}[{Bogomolov \cite[Lemma 5.6]{Bo}}] \label{t1.2}
Let $p$ be any prime number. Then there is a group $G$ of order
$p^6$ such that $\br_{\rm nr}(\bm{C}(G))\ne \{0\}$.
\end{theorem}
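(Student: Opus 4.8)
The plan is to apply Bogomolov's formula $\br_{\rm nr}(\bm{C}(G))\cong B_0(G)$ of \cite[Theorem 3.1]{Bo}, where
$$B_0(G)\;=\;\bigcap_{A\le G}\ker\Big(\mathrm{res}^{G}_{A}\colon H^{2}(G,\bm{Q}/\bm{Z})\longrightarrow H^{2}(A,\bm{Q}/\bm{Z})\Big),$$
the intersection running over all abelian (equivalently, all bicyclic) subgroups $A$. So the problem reduces to producing a \emph{single} group $G$ of order $p^{6}$ with $B_0(G)\ne\{0\}$, improving on Saltman's group of order $p^{9}$ in Theorem~\ref{t1.1}. By Pontryagin duality $B_0(G)^{\vee}$ is the Bogomolov multiplier $M(G)/M_0(G)$, where $M(G)=H_{2}(G,\bm{Z})$ is the Schur multiplier and $M_0(G)$ is the subgroup generated by the images of the maps $M(A)\to M(G)$ induced by the inclusions of the bicyclic subgroups $A\le G$; thus I must find $G$ with $M_0(G)\subsetneq M(G)$.

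First I would search among $p$-groups of nilpotency class $2$, where the computation is governed by the commutator pairing alone. For $p$ odd and $G$ of class $2$ and exponent $p$, put $V=G/[G,G]$, $W=[G,G]$, and let $\lambda\colon\wedge^{2}V\twoheadrightarrow W$ be the map induced by the commutator; the exterior-square description of the Schur multiplier then gives
$$M(G)/M_0(G)\;\cong\;\ker\lambda\,\Big/\,\big\langle\,v_{1}\wedge v_{2}\ :\ \lambda(v_{1}\wedge v_{2})=0\,\big\rangle,$$
that is, the Bogomolov multiplier is $\ker\lambda$ modulo the span of the decomposable vectors it contains. With $d=\dim_{\bm{F}_{p}}V$, $r=\dim_{\bm{F}_{p}}W$ one has $|G|=p^{d+r}$, so at order $p^{6}$ the genuinely nonabelian possibilities are $(d,r)\in\{(5,1),(4,2),(3,3)\}$; going through these one finds that $\ker\lambda$ is always spanned by its decomposable vectors — for $(d,r)=(4,2)$ this is because a vector of $\wedge^{2}\bm{F}_{p}^{4}$ is decomposable exactly when it is isotropic for the Pl\"ucker quadratic form $Q$, and since $Q$ is nondegenerate and split, every $4$-dimensional subspace of the $6$-dimensional space $\wedge^{2}V$ contains isotropic vectors that span it. So the easy class-$2$ groups do not reach $B_0\ne\{0\}$ at order $p^{6}$.

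Hence I would instead take an explicit group $G$ of order $p^{6}$ outside this family — of nilpotency class $3$, or a class-$2$ group whose abelianization has an element of order $p^{2}$ (so that some of its bicyclic subgroups become cyclic of order $p^{2}$, with trivial Schur multiplier, and drop out of $M_0(G)$, leaving room for a surviving class). The remaining steps are: (i) write down structure constants making $|G|$ exactly $p^{6}$ and $G$ with no nontrivial abelian direct factor, since such a factor would only shrink the effective example; (ii) compute $M(G)$ and, for each bicyclic $A\le G$, the image of $M(A)$ in $M(G)$; and (iii) exhibit a class of $M(G)$ that lies in none of those images. The hard part of the plan is step (iii) — this is exactly where \cite[Lemma 5.6]{Bo} does its work: the two constraints, order as small as $p^{6}$ and yet \emph{every} bicyclic restriction annihilating the obstruction, leave very little freedom, and checking that the proposed class is genuinely nonzero modulo $M_0(G)$ is the delicate combinatorial heart of the argument. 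Once such a $G$ is found, $B_0(G)\ne\{0\}$, so $\br_{\rm nr}(\bm{C}(G))\ne\{0\}$ and $\bm{C}(G)$ is not stably $\bm{C}$-rational.
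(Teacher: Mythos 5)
Your reduction to Bogomolov's $B_0(G)$ and the duality with $M(G)/M_0(G)$ are sound, and your elimination of the class-$2$, exponent-$p$ candidates at order $p^6$ is in fact correct: for $(d,r)=(4,2)$ the Pfaffian form on $\wedge^2\bm{F}_p^4$ is split of Witt index $3$, so the radical of its restriction to the $4$-dimensional subspace $\ker\lambda$ has dimension at most $2$ (the codimension of $\ker\lambda$), and the induced nondegenerate form on the quotient still has positive Witt index; consequently the isotropic (that is, decomposable) vectors of $\ker\lambda$ cannot all lie in a proper subspace, and $B_0(G)=\{0\}$. Your one-line justification glosses over this Witt-index step, which is exactly where the only possible failure mode --- a $2$-dimensional radical with anisotropic quotient --- gets ruled out; the cases $(5,1)$ and $(3,3)$ are likewise fine.

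The genuine gap is that the argument stops precisely where the theorem begins. Theorem \ref{t1.2} is an existence statement, and your steps (i)--(iii) --- write down structure constants for a suitable group of order $p^6$ of class $3$ (or with $p^2$-torsion in the abelianization), compute $M(G)$ together with the images of $M(A)$ for all bicyclic $A\le G$, and exhibit a class of $M(G)$ lying in none of those images --- are the entire content of \cite[Lemma 5.6]{Bo}. You explicitly defer step (iii) as ``the delicate combinatorial heart of the argument,'' so no group is ever exhibited and no nonzero class is produced; what you have is a correctly oriented search plan (it does steer away from a real dead end), not a proof. For comparison, the paper itself gives no proof of this statement either --- it is quoted from Bogomolov --- so the only fair assessment is of your text on its own terms, and on those terms the existence claim remains unestablished.
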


Colliot-Th\'el\`ene and Ojanguren generalized the notion of the
unramified Brauer group to the unramified cohomology group $H_{\rm
nr}^d(\bm{C}(G),\bm{Q}/\bm{Z})$ where $d\ge 2$ \cite{CTO}; also
see Saltman's treatment \cite{Sa3}. Again, if $\bm{C}(G)$ is
stably $\bm{C}$-rational, then $H_{\rm
nr}^d(\bm{C}(G),\bm{Q}/\bm{Z})$ $=\{0\}$ \cite[Proposition
1.2]{CTO}. Moreover, $H_{\rm nr}^2(\bm{C}(G),\bm{Q}/\bm{Z})\simeq
\br_{\rm nr}(\bm{C}(G))$.

Suppose that $G\to GL(W)$ is a faithful complex representation.
Then $\bm{C}(G)$ and $\bm{C}(W)^G$ are stably isomorphic by the
No-Name Lemma (see \cite[Theorem 4.1]{CK} by considering
$\bm{C}(W)(x_g:g\in G)^G$). Thus $H_{\rm
nr}^d(\bm{C}(G),\bm{Q}/\bm{Z})\simeq H_{\rm
nr}^d(\bm{C}(W)^G,\bm{Q}/\bm{Z})$ for any $d\ge 2$ by
\cite[Proposition 1.2]{CTO}. For this reason, we will consider
only $H_{\rm nr}^d(\bm{C}(G),\bm{Q}/\bm{Z})$ for any $d\ge 2$.

Since $\br_{\rm nr}(\bm{C}(G))$ is just an obstruction to the
rationality of $\bm{C}(G)$, it may happen that, for some group
$G$, $\br_{\rm nr}(\bm{C}(G))=\{0\}$, but $\bm{C}(G)$ is not
rational over $\bm{C}$. This phenomenon is exemplified by the
following theorem of Peyre.

\begin{theorem}[{Peyre \cite[Theorem 3]{Pe2}}] \label{t1.3}
Let $p$ be any odd prime number. Then there is a group $G$ of
order $p^{12}$ such that $\br_{\rm nr}(\bm{C}(G))=\{0\}$ and
$H_{\rm nr}^3(\bm{C}(G),\bm{Q}/\bm{Z})\ne \{0\}$. Consequently,
$\bm{C}(G)$ is not stably $\bm{C}$-rational.
\end{theorem}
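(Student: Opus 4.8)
\medskip
\noindent\emph{Idea of proof.}
The plan is to exhibit an explicit $p$-group $G$ of order $p^{12}$ for which the two conditions $\br_{\rm nr}(\bm{C}(G))=\{0\}$ and $H_{\rm nr}^3(\bm{C}(G),\bm{Q}/\bm{Z})\ne\{0\}$ are checked by separate arguments; the failure of stable rationality is then immediate from \cite[Proposition 1.2]{CTO}. I would take $G$ nilpotent of class two and exponent $p$ (this is where $p$ odd is used, so that $G$ is governed by a single alternating datum), presented as a central extension $1\to Z\to G\to V\to 1$ with $Z$ a central elementary abelian subgroup, $V=G/Z$ elementary abelian, and $\dim_{\bm{F}_p}Z+\dim_{\bm{F}_p}V=12$; the structure of $G$ is then carried by the commutator pairing $c\colon\wedge^2 V\to Z$. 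Two tools are needed. (i) Bogomolov's formula \cite[Theorem 3.1]{Bo}, which identifies $\br_{\rm nr}(\bm{C}(G))$ with $B_0(G):=\bigcap_{B}\ker\bigl(H^2(G,\bm{Q}/\bm{Z})\xrightarrow{\mathrm{res}}H^2(B,\bm{Q}/\bm{Z})\bigr)$, the intersection over all two-generated subgroups $B\le G$. (ii) A group-theoretic criterion in degree three --- the analogue of (i) that also records residues along divisorial valuations --- which guarantees that a class $\alpha\in H^3(G,\bm{Q}/\bm{Z})$ represents an element of $H_{\rm nr}^3(\bm{C}(G),\bm{Q}/\bm{Z})$ once its restrictions to the subgroups generated by at most three elements are suitably controlled and the associated residue maps $H^3(H,\bm{Q}/\bm{Z})\to H^2(H',\bm{Q}/\bm{Z})$, for pairs $H'\triangleleft H$ with $H/H'$ cyclic, vanish.

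For $\br_{\rm nr}(\bm{C}(G))=\{0\}$ I would compute $H^2(G,\bm{Q}/\bm{Z})$ from the extension via the Lyndon--Hochschild--Serre spectral sequence: modulo the part pulled back from $V$, it is governed by $c$, and the restriction of a class to a two-generated $B\cong(\bm{Z}/p)^2$ lands in $H^2(B,\bm{Q}/\bm{Z})\cong\bm{Z}/p$ and is read off from the value of that class on the line $\wedge^2\bar B\subset\wedge^2 V$ together with the pulled-back contribution. Hence it suffices to arrange $c$ so that every two-dimensional subspace of $V$ is negligible for every class in $H^2(G,\bm{Q}/\bm{Z})$ --- a concrete linear-algebra constraint on $c$ --- which forces $B_0(G)=\{0\}$. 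This is the reverse of the mechanism that produces Bogomolov's $p^6$ example, and should be routine once $c$ is written out.

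The heart of the proof is $H_{\rm nr}^3(\bm{C}(G),\bm{Q}/\bm{Z})\ne\{0\}$. Here I would exhibit an explicit indecomposable class $\alpha\in H^3(G,\bm{Q}/\bm{Z})$ (one not expressible as a cup product of lower-degree classes), read off from the presentation of $G$. Two points must be verified. First, $\alpha\ne 0$: a spectral-sequence computation showing $\alpha$ is not a boundary and survives the filtration. Second, $\alpha$ satisfies criterion (ii): restricting $\alpha$ to each relevant subgroup $H$ on at most three generators, one checks that every residue along a cyclic quotient of $H$ vanishes --- the choice of $c$ being made precisely so that the two- and three-generated subgroups of $G$ are too degenerate to support a nonzero residue of $\alpha$. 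Since Step two gives $H_{\rm nr}^2(\bm{C}(G),\bm{Q}/\bm{Z})=\br_{\rm nr}(\bm{C}(G))=\{0\}$, the decomposable subgroup $H^1(G,\bm{Q}/\bm{Z})\cup H_{\rm nr}^2(\bm{C}(G),\bm{Q}/\bm{Z})$ of $H_{\rm nr}^3$ already vanishes, so it is enough to produce one nonzero unramified degree-three class.

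The main obstacle is the tension between Steps two and three: $G$ must be generic enough in degree three to carry an indecomposable unramified class, yet degenerate enough in degree two that $B_0(G)$ collapses --- and since the obvious nilpotent groups with rich $H^3$ tend to have nonzero $\br_{\rm nr}$ (that is exactly how the $p^6$ example arises), pinning down one group $G$ of order $p^{12}$ achieving both, and then carrying out the classification of its two- and three-generated subgroups required by criterion (ii), is the delicate part. Once $G$ is fixed the residue computations are finite and essentially mechanical.
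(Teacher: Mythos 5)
Your framing---a class-two group of exponent $p$ presented as a central extension $0\to V\to G\to U\to 0$ with everything governed by the commutator form, plus Bogomolov's formula for the degree-two part---matches the setting in which this theorem is actually proved, and the degree-two half of your plan is workable (it is equivalent to the computation $S_{\dec}^2=S^2$ carried out in this paper). The gap is the tool you call criterion (ii). There is no degree-three analogue of Bogomolov's formula that certifies a class of $H^3(G,\bm{Q}/\bm{Z})$ as unramified by controlling its restrictions to subgroups on at most three generators together with residues along cyclic quotients; you assume such a criterion exists and then rest the ``heart of the proof'' on it. Producing a usable substitute is precisely the content of Peyre's Theorem 2 (quoted here as Theorem \ref{t2.3}), without which your argument does not get off the ground. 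That theorem is both more limited and more concrete than what you posit: for these special central extensions it exhibits a specific subgroup $K_{\max}^3/K^3$ of $H_{\rm nr}^3(\bm{C}(G),\bm{Q}/\bm{Z})$, where $K^3=\gamma^*(V^*)\wedge U^*$ and $K_{\max}^3=(S_{\dec}^3)^\perp$ are subspaces of $\wedge^3 U^*$ defined by pure linear algebra from $\gamma:\wedge^2U\to V$. Nonvanishing of $H_{\rm nr}^3$ is thereby reduced to the finite check $S_{\dec}^3\subsetneq S^3$, i.e.\ to finding a vector of $S^3=(K^3)^\perp$ not in the span of the elements $u'\wedge u\in S^3$; Section 3, Case 1 of this paper shows how Peyre's $p^{12}$ group (with $\dim U=\dim V=6$) arises exactly this way.

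A second, related misstep: the nontrivial unramified classes produced by this machinery are images of $\wedge^3U^*\cong\wedge^3H^1(G,\bm{F}_p)$, i.e.\ cup products of three degree-one characters of $G$. Your plan to exhibit an \emph{indecomposable} class (``not expressible as a cup product of lower-degree classes'') points in the opposite direction from the mechanism that works; the subtlety is not indecomposability but which decomposable classes survive modulo $K^3$ and are unramified, which is what the orthogonality condition against $S_{\dec}^3$ encodes. Finally, note that even with Peyre's theorem in hand one only obtains a \emph{subgroup} of $H_{\rm nr}^3$ (the remark after Proposition \ref{p3.1} stresses that it is not known whether $K_{\max}^3/K^3$ exhausts $H_{\rm nr}^3$), so the complete residue-theoretic classification over all small subgroups that your criterion (ii) would require is not available even a posteriori.
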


The idea of Peyre's proof is to find a subgroup $K_{\max}^3/K^3$
of $H_{\rm nr}^3(\bm{C}(G),\bm{Q}/\bm{Z})$ and to show that
$K_{\max}^3/K^3\ne \{0\}$ (see \cite[page 210]{Pe2}). Using
Peyre's method, we will prove the following theorem.

\begin{theorem} \label{t1.4}
Let $p$ be an odd prime number. Then there is a group $G$ of order
$p^9$ such that $\br_{\rm nr}(\bm{C}(G))=\{0\}$ and $H_{\rm
nr}^3(\bm{C}(G),\bm{Q}/\bm{Z})\ne \{0\}$. Thus $\bm{C}(G)$ is not
stably $\bm{C}$-rational.
\end{theorem}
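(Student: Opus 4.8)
The plan is to imitate Peyre's strategy from Theorem 1.3 but to engineer a group of order $p^9$ rather than $p^{12}$, exploiting the freedom one has in choosing the group so that the relevant cohomological invariants are as small or as large as needed. First I would recall Peyre's explicit description of the subgroup $K_{\max}^3/K^3$ sitting inside $H_{\rm nr}^3(\bm{C}(G),\bm{Q}/\bm{Z})$: here $K^3$ is generated by symbols of the form coming from decomposable classes and from subgroups on which the cohomology visibly vanishes, while $K_{\max}^3$ is cut out by the requirement that the class restrict trivially to every ``bicyclic'' (or appropriate small) subgroup --- exactly the condition that detects unramifiedness via Bogomolov-type purity at divisorial valuations. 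The key numerical facts one needs are (i) a lower bound showing $K_{\max}^3 \ne K^3$ for the chosen $G$, which forces $H_{\rm nr}^3(\bm{C}(G),\bm{Q}/\bm{Z}) \ne \{0\}$, and (ii) the vanishing $\br_{\rm nr}(\bm{C}(G)) = \{0\}$, which by Bogomolov's formula (\cite[Theorem 3.1]{Bo}) amounts to checking that $H^2(G,\bm{Q}/\bm{Z})$ is detected by (is the sum of restrictions from) the abelian subgroups of $G$, equivalently that the Bogomolov multiplier $B_0(G)$ vanishes.

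Concretely, I would look for $G$ as a suitable central extension built on top of a group of the type Peyre uses --- typically a quotient of a free product or a carefully chosen quotient of the relevant unipotent group --- but with the exponent-$p$ and nilpotency-class constraints arranged so that $|G| = p^9$. The construction should be of the form $1 \to Z \to G \to \bar G \to 1$ with $Z$ central of rank controlled so that $H^3$ acquires a new class (from the transgression/inflation sequence and the structure of $H^*(\bar G)$) that survives to the unramified part, while simultaneously $H^2(G,\bm{Q}/\bm{Z})$ picks up no new non-decomposable (in the Bogomolov sense) classes. The heart of the matter is a Lyndon--Hochschild--Serre spectral sequence computation for $H^3(G,\bm{Q}/\bm{Z})$ together with the restriction maps to all bicyclic subgroups: one must produce an explicit cohomology class $\alpha \in H^3(G,\bm{Q}/\bm{Z})$, exhibit a cochain-level or symbol-level representative, and verify $\fn{res}^G_A \alpha = 0$ for every abelian (indeed every bicyclic) subgroup $A \le G$, which is what places $\alpha$ in $K_{\max}^3$, and then separately verify $\alpha \notin K^3$ by a dimension count or by pairing against a suitably chosen subgroup.

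I expect the main obstacle to be precisely this last bookkeeping: showing that the candidate class $\alpha$ is simultaneously (a) unramified --- i.e. in $K_{\max}^3$, which requires controlling restrictions to a potentially large family of abelian subgroups of a $p^9$ group --- and (b) not already in $K^3$, i.e. genuinely new. Because we are pushing the order down from $p^{12}$ to $p^9$, there is much less ``room,'' so the verification that $B_0(G) = 0$ (needed for $\br_{\rm nr}(\bm{C}(G)) = \{0\}$) and the verification that $\alpha$ survives are in tension, and getting both to hold pins down the group essentially uniquely. I would organize the computation by: fixing a presentation of $G$ by generators and relations; computing $H^1, H^2, H^3$ with $\bm{Q}/\bm{Z}$ (or $\bm{F}_p$) coefficients via the associated graded / spectral sequence; enumerating the conjugacy classes of maximal abelian subgroups; and finally checking the two conditions on $\alpha$ by restriction formulas. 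The consequence ``$\bm{C}(G)$ is not stably $\bm{C}$-rational'' then follows immediately from \cite[Proposition 1.2]{CTO} since $H_{\rm nr}^3$ is a stable birational invariant that vanishes for stably rational fields.
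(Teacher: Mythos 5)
Your proposal correctly identifies the overall strategy --- use Peyre's subgroup $K_{\max}^3/K^3$ of $H_{\rm nr}^3(\bm{C}(G),\bm{Q}/\bm{Z})$ as the detecting invariant, and arrange simultaneously that $\br_{\rm nr}(\bm{C}(G))=\{0\}$ --- but it stops short of the actual content of the theorem, which is an existence statement. You never exhibit the group: the entire proof lives in the choice of the central extension $0\to V\to G\to U\to 0$ with $\dim_{\bm{F}_p}V=3$, $\dim_{\bm{F}_p}U=6$, and a specific surjection $\gamma:\wedge^2 U\to V$ encoding the commutator relations (for instance $[u_1,u_2]=[u_3,u_4]=v_1$, $[u_1,u_4]=[u_2,u_5]=[u_3,u_6]=v_2$, $[u_3,u_5]=[u_4,u_6]=v_3$, as in the paper's Theorem \ref{t2.4}). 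Saying that the constraints ``pin down the group essentially uniquely'' is both unsubstantiated and false --- the paper produces three distinct families of examples (Theorems \ref{t2.4}, \ref{t2.6}, \ref{t2.7}), and Section 3 explains the search heuristic: pick an indecomposable $w\in\wedge^3 U$, compute the space $X_w$ of $x\in\wedge^2U^*$ with $\langle\langle w, x\wedge y\rangle\rangle=0$ for all $y\in U^*$, and choose $K^2=\gamma^*(V^*)$ inside $X_w$. Without a concrete $\gamma$ there is nothing to verify and no proof.

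Your description of the verification step also diverges from what Peyre's criterion actually requires. Once Theorem \ref{t2.3} is taken as input, there is no Lyndon--Hochschild--Serre computation of $H^3(G,\bm{Q}/\bm{Z})$, no enumeration of abelian or bicyclic subgroups, and no restriction maps to check: everything reduces to finite-dimensional linear algebra over $\bm{F}_p$ in $\wedge^2 U$, $\wedge^3 U$ and their duals. One computes $K^2=\gamma^*(V^*)$ and $K^3=\gamma^*(V^*)\wedge U^*$, passes to the orthogonal complements $S^2$, $S^3$, and must show that $S^2$ is spanned by decomposable vectors $u_1\wedge u_2$ (giving $\br_{\rm nr}=\{0\}$) while $S^3$ is \emph{not} spanned by vectors of the form $u'\wedge u$ with $u'\in\wedge^2U$, $u\in U$ (giving $K_{\max}^3/K^3\ne\{0\}$). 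The one nontrivial tool is the paper's Lemma \ref{l2.5}: $w\in\wedge^d U$ has the form $u'\wedge u$ if and only if $w\wedge u_0=0$ for some nonzero $u_0\in U$, which turns the decomposability check into solving an explicit system of bilinear equations. Your framing in terms of Bogomolov's $B_0(G)$ and restrictions to bicyclic subgroups is a correct description of where Peyre's criterion ultimately comes from, but it is not the computation one performs, and substituting it for the linear-algebra check would make the $p^9$ examples essentially intractable by hand.
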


Theorem \ref{t1.4} will be proved in Section 2 (see Theorem
\ref{t2.4}, Theorem \ref{t2.6} and Theorem \ref{t2.7}). We will
explain the idea how these ``counter-examples" are constructed in
Section 3. The computation of $K_{\max}^3/K^3$ for extraspecial
groups will also be carried out in Section 3.

\section{Main results}

Throughout this article, $p$ is an odd prime number, $\bm{F}_p$ is the finite field with $p$ elements.

Recall the construction of the $p$-group $G$ in \cite[Section 5]{Pe2}.
$G$ is a $p$-group of exponent $p$ satisfying that (i) the center of $G$ is equal to its commutator subgroup,
and (ii) $G$ is a central extension of vector spaces over $\bm{F}_p$.
Thus there are finite-dimensional vector spaces $V$ and $U$ over $\bm{F}_p$ and a short exact sequence
\[
0\to V \xrightarrow{\iota} G \xrightarrow{\pi} U\to 0
\]
such that $\iota(V)=Z(G)=[G,G]$ (where $Z(G)$ and $[G,G]$ denote the center of $G$ and the commutator subgroup of $G$ respectively).

We will adopt the multiplicative notations for elements of $G$,
and the additive notation for elements in the vector spaces $V$ and $U$,
$V^*$ and $U^*$ will denote the dual spaces of $V$ and $U$.

Define $\gamma:\wedge^2 U\to V$ by
\begin{equation} \label{q1}
\iota\circ \gamma (\pi(g_1)\wedge \pi(g_2))=[g_1,g_2]
\end{equation}
for any $g_1,g_2\in G$ where $[g_1,g_2]:=g_1g_2g_1^{-1}g_2^{-1}$.

Since $\iota(V)=[G,G]$, $\gamma$ is surjective.
It follows that the dual map $\gamma^*:V^*\to \wedge^2 U^*$ is injective (note that we write $\wedge^2 U^*$ for $\wedge^2 (U^*)$).

\begin{defn} \label{d2.1}
For any positive integer $d$, define $\Phi:\wedge^d U^*\to (\wedge^d U)^*$ as follows.
For $f_1,\ldots,f_d\in U^*$ and $f=f_1\wedge f_2\wedge \cdots \wedge f_d$,
define $\Phi(f)=\varphi_f:\wedge^d U\to \bm{F}_p$ such that $\varphi_f(u_1\wedge u_2\wedge \cdots \wedge u_d)= %
\sum_{\tau\in S_d}\varepsilon(\tau)f_1(u_{\tau(1)})\cdot
f_2(u_{\tau(2)})\cdot \cdots \cdot f_d(u_{\tau(d)})$ for any
$u_1,\ldots,u_d\in U$ (see \cite[page 209]{Pe2}). Thus we define
the non-degenerate pairing $\wedge^d U\times \wedge^d U^* \to
\bm{F}_p$ by $\langle \langle s,f\rangle \rangle:=\Phi(f)(s)$ for
any $s\in \wedge^d U$, any $f\in \wedge^d U^*$.
\end{defn}

\begin{defn}[{\cite[page 209]{Pe2}}] \label{d2.2}
Let $\gamma$ be the map of Formula \eqref{q1}.
Define
\begin{gather*}
K^2=\gamma^*(V^*),\quad K^3=\gamma^*(V^*)\wedge U^*, \\
S^2=(K^2)^\bot=\{w\in \wedge^2 U:\langle\langle w,f\rangle\rangle=0 \mbox{ for all }f\in K^2\}, \\
S^3=(K^3)^\bot=\{w\in \wedge^3 U:\langle\langle
w,f\rangle\rangle=0 \mbox{ for all }f\in K^3\}.
\end{gather*}

We define
\begin{gather*}
S_{\dec}^2=\langle  u_1\wedge u_2\in S^2:u_1,u_2\in U  \rangle, \\
S_{\dec}^3=\langle  u'\wedge u\in S^3:u'\in \wedge^2 U,~u\in U  \rangle, \\
K_{\max}^2=(S_{\dec}^2)^\bot, \quad K_{\max}^3=(S_{\dec}^3)^\bot
\end{gather*}
where $(S_{\dec}^2)^\bot$ is the orthogonal complement of
$S_{\dec}^2$ in the pairing $\wedge^2U\times \wedge^2 U^* \to
\bm{F}_p$, similarly for $(S_{\dec}^3)^\bot$.
\end{defn}

\begin{theorem}[{Peyre \cite[Theorem 2, page 210]{Pe2}}] \label{t2.3}
Let $G$ be a $p$-group defined as above. Then $K_{\max}^2/K^2
\simeq \br_{\rm nr}(\bm{C}(G))$ and $K_{\max}^3 /K^3$ is a
subgroup of $H_{\rm nr}^3(\bm{C}(G),\bm{Q}/\bm{Z})$.
\end{theorem}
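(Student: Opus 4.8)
\emph{Strategy.} The two assertions need quite different arguments: the degree-$2$ isomorphism is a reformulation of Bogomolov's formula together with linear algebra, whereas the degree-$3$ inclusion rests on Peyre's refinement of that formula and is where the real difficulty lies.

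\smallskip
\noindent\emph{Degree $2$.} I would start from Bogomolov's formula \cite[Theorem 3.1]{Bo}: $\br_{\rm nr}(\bm{C}(G))\simeq B_0(G):=\bigcap_A\ker\bigl(\fn{res}^G_A\colon H^2(G,\bm{Q}/\bm{Z})\to H^2(A,\bm{Q}/\bm{Z})\bigr)$, the intersection over the (bicyclic) abelian subgroups $A\le G$; following Bogomolov one may work with $\bm{F}_p$-coefficients, the interesting classes being $p$-torsion. Next I would bring in the cohomology of the central extension $0\to V\to G\to U\to 0$: its Lyndon--Hochschild--Serre five-term exact sequence gives $H^1(G,\bm{F}_p)=U^*$ and identifies $\ker\bigl(\fn{inf}\colon H^2(U,\bm{F}_p)\to H^2(G,\bm{F}_p)\bigr)$ with the image of the transgression $V^*\to H^2(U,\bm{F}_p)$, which --- by the definition of $\gamma$ as the commutator form, and because $p$ is odd so that the extension class of $G$ corresponds to the alternating form $\gamma$ --- is precisely $\gamma^*(V^*)=K^2\subseteq\wedge^2U^*\subseteq H^2(U,\bm{F}_p)$. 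Bogomolov's analysis shows $B_0(G)$ already lies in the decomposable part $\fn{inf}(\wedge^2U^*/K^2)$, the Bockstein classes in $H^2(U,\bm{F}_p)$ being detected on cyclic subgroups. The rest is the determinant dictionary: for a class represented by $f_1\wedge f_2$ and a bicyclic abelian $A$ with image $\bar A=\langle u_1,u_2\rangle$ in $U$, the restriction to $A$ is nonzero exactly when $\langle\langle u_1\wedge u_2,f_1\wedge f_2\rangle\rangle\ne0$, while such an $A$ exists exactly when $[a_1,a_2]=1$ for lifts $a_i$, i.e.\ when $u_1\wedge u_2\in(K^2)^\bot=S^2$. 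Hence $B_0(G)$ is represented by $\{\omega\in\wedge^2U^*:\langle\langle w,\omega\rangle\rangle=0\text{ for all }w=u_1\wedge u_2\in S^2\}=(S_{\dec}^2)^\bot=K_{\max}^2$, modulo $K^2$ (and $K^2\subseteq K_{\max}^2$ since $S_{\dec}^2\subseteq S^2=(K^2)^\bot$). This yields $K_{\max}^2/K^2\simeq\br_{\rm nr}(\bm{C}(G))$.

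\smallskip
\noindent\emph{Degree $3$.} I would realize the claimed subgroup by cup products. The cup-product map $\wedge^3U^*=\wedge^3H^1(G,\bm{F}_p)\to H^3(G,\bm{F}_p)\to H^3(G,\bm{Q}/\bm{Z})$ kills $K^3=K^2\wedge U^*$, because $K^2$ already dies in $H^2(G)$; an inspection of the subring of $H^{\le 3}(G,\bm{F}_p)$ generated by $H^1$, using the special structure of $G$, shows that the induced map stays injective on $K_{\max}^3/K^3$. The core step is then to show that every class coming from $K_{\max}^3$ is \emph{unramified} on $\bm{C}(G)$, i.e.\ has trivial residue along every prime divisor of a smooth model. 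For this I would use Peyre's degree-$3$ analogue of Bogomolov's criterion (built on the Gersten--Bloch--Ogus machinery of \cite{CTO}, \cite{Sa3}, \cite{Pe2}): a class of $H^3(G,\bm{Q}/\bm{Z})$ coming from group cohomology is unramified if and only if its restriction vanishes on every member of an explicit finite family of subgroups --- the abelian ones, together with certain ``almost abelian'' subgroups $H$ (those with $[H,H]$ central cyclic and $H/[H,H]$ of small rank, so that the unramified $H^3$ of $\bm{C}(H)$ is under control). Translating ``$\fn{res}^G_H=0$'' over this family through the pairing $\wedge^3U\times\wedge^3U^*\to\bm{F}_p$, one finds it amounts exactly to $\langle\langle w,\omega\rangle\rangle=0$ for every $w=u'\wedge u$ ($u'\in\wedge^2U$, $u\in U$) lying in $S^3=(K^3)^\bot$, i.e.\ to $\omega\in(S_{\dec}^3)^\bot=K_{\max}^3$. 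Therefore the cup products of elements of $K_{\max}^3$ land in $H_{\rm nr}^3(\bm{C}(G),\bm{Q}/\bm{Z})$, and modulo $K^3$ this exhibits $K_{\max}^3/K^3$ as a subgroup of it.

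\smallskip
\noindent\emph{Where the difficulty lies.} Granting Bogomolov's formula, the degree-$2$ argument is a spectral-sequence computation followed by the determinant dictionary, and is routine. The genuine obstacle is in degree $3$, and it is twofold. First, one needs the correct unramifiedness criterion for $H^3$ of $G$-lattice fields; this is strictly harder than the degree-$2$ case, since the residue of an $H^3$-class along a divisor is an $H^2$-class of the residue field, which itself has a nontrivial unramified part, forcing an \emph{iterated} residue analysis along the Gersten complex with careful bookkeeping of the Tate twists $\bm{Q}/\bm{Z}(i)$. Second, one must determine exactly which subgroups of $G$ can carry a ramified restriction and verify that this family is detected precisely by the ``partially decomposable'' vectors $u'\wedge u\in S^3$ --- which requires classifying those subgroups and computing the relevant restriction maps on their cohomology. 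These two steps, establishing and then applying the degree-$3$ criterion, are where essentially all of the work sits and constitute Peyre's main technical contribution; a fully self-contained proof would have to reprove the criterion. (A minor additional point, handled through the structure of $H^{\le 3}(G,\bm{F}_p)$, is the injectivity of $\wedge^3U^*/K^3\to H^3(G,\bm{Q}/\bm{Z})$ on $K_{\max}^3/K^3$.)
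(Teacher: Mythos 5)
The paper does not prove Theorem \ref{t2.3} at all: it is imported verbatim from Peyre \cite[Theorem 2]{Pe2}, so there is no in-paper proof to compare yours against. Judged on its own terms, your proposal is a reasonable reconstruction of Peyre's strategy --- Bogomolov's bicyclic-subgroup formula plus the Hochschild--Serre sequence for $0\to V\to G\to U\to 0$ in degree $2$, and cup products from $\wedge^3 U^*$ together with an unramifiedness criterion in degree $3$ --- and you correctly locate where the real work is. But it is an outline, not a proof: the entire degree-$3$ content rests on a ``degree-$3$ analogue of Bogomolov's criterion'' that you assert rather than establish, and you yourself concede that a self-contained argument would have to reprove it. Since that criterion \emph{is} the theorem in all but name, the proposal as written does not close the gap.

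Two more specific points. First, you state the degree-$3$ criterion as an ``if and only if''; only the sufficiency direction (vanishing of the relevant restrictions implies unramified) is needed, and only that direction is what Peyre's argument delivers --- which is exactly why the conclusion is an inclusion $K_{\max}^3/K^3\subseteq H^3_{\rm nr}$ rather than an equality. As stated, your criterion would prove more than the theorem claims, which should be a warning sign. Second, you pass silently over the identification of classes in $H^3(G,\bm{Q}/\bm{Z})$ with classes in $H^3(\bm{C}(G),\bm{Q}/\bm{Z})$ (via a faithful representation $W$, the extension $\bm{C}(W)/\bm{C}(W)^G$, and the No-Name Lemma), and over the injectivity of $K_{\max}^3/K^3$ into the latter group; the injectivity statement you defer to ``an inspection of the subring of $H^{\le 3}(G,\bm{F}_p)$ generated by $H^1$'' is itself a nontrivial computation in Peyre's paper. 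The degree-$2$ half of your sketch is essentially sound and standard.
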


The main results of this paper are the following theorem
\ref{t2.4}, \ref{t2.6} and \ref{t2.7}.

\begin{theorem} \label{t2.4}
Let $p$ be an odd prime number, $G$ be the $p$-group of exponent $p$ defined by $G=\langle v_i, u_j: 1\le i\le 3, 1\le j\le 6\rangle$ satisfying the following conditions

$(1)$ $Z(G)=[G,G]=\langle v_1,v_2,v_3\rangle$, and

$(2)$ $[u_1,u_2]=[u_3,u_4]=v_1$, $[u_1,u_4]=[u_2,u_5]=[u_3,u_6]=v_2$, $[u_3,u_5]=[u_4,u_6]=v_3$,
and the other unlisted commutators, e.g.\ $[u_1,u_3]$, $[u_1,u_5]$, etc.,
are equal to the identity element of $G$.

Then $\br_{\rm nr}(\bm{C}(G))=\{0 \}$ and $H_{\rm
nr}^3(\bm{C}(G),\bm{Q}/\bm{Z})\ne \{0 \}$.
\end{theorem}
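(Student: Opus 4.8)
The plan is to apply Theorem \ref{t2.3}: since $K^2\subseteq K_{\max}^2$ and $K^3\subseteq K_{\max}^3$, and since $\dim(K_{\max}^i/K^i)=\dim S^i-\dim S_{\dec}^i$ (both $K^i$ and $K_{\max}^i$ being orthogonal complements for the perfect pairing of Definition \ref{d2.1}), it suffices to prove that $S_{\dec}^2=S^2$ (so that $\br_{\rm nr}(\bm{C}(G))\simeq K_{\max}^2/K^2=\{0\}$) and that $S_{\dec}^3\subsetneq S^3$ (so that $\{0\}\ne K_{\max}^3/K^3\subseteq H_{\rm nr}^3(\bm{C}(G),\bm{Q}/\bm{Z})$). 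Fix the basis $u_1^*,\dots,u_6^*$ of $U^*$ dual to $u_1,\dots,u_6$, and write $u_{ij}=u_i\wedge u_j$, $u_{ij}^*=u_i^*\wedge u_j^*$, $u_{ijk}=u_i\wedge u_j\wedge u_k$, $u_{ijk}^*=u_i^*\wedge u_j^*\wedge u_k^*$; under the pairing of Definition \ref{d2.1} the monomial bases $\{u_{ij}\}_{i<j}$, $\{u_{ij}^*\}_{i<j}$ are dual, and likewise in degree three. By \eqref{q1} and condition $(2)$, $K^2=\gamma^*(V^*)=\langle\omega_1,\omega_2,\omega_3\rangle$ with $\omega_1=u_{12}^*+u_{34}^*$, $\omega_2=u_{14}^*+u_{25}^*+u_{36}^*$, $\omega_3=u_{35}^*+u_{46}^*$, and $K^3=\langle\omega_1,\omega_2,\omega_3\rangle\wedge U^*$.

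\emph{Step 1.} Here $S^2=(K^2)^\bot$ is the $12$-dimensional subspace of $\wedge^2U$ consisting of $\sum_{i<j}a_{ij}u_{ij}$ with $a_{12}+a_{34}=0$, $a_{14}+a_{25}+a_{36}=0$, $a_{35}+a_{46}=0$, and a decomposable $p\wedge q$ lies in $S^2$ exactly when the plane $\langle p,q\rangle$ is totally isotropic for $\omega_1,\omega_2,\omega_3$. I would exhibit $12$ linearly independent such bivectors: the eight monomials $u_{ij}$ whose index pair appears in none of the $\omega_m$ (namely $\{i,j\}\in\{\{1,3\},\{1,5\},\{1,6\},\{2,3\},\{2,4\},\{2,6\},\{4,5\},\{5,6\}\}$), together with $(u_1+u_4)\wedge(u_2+u_3)$, $(u_1+u_2)\wedge(u_4-u_5)$, $(u_3+u_4)\wedge(-u_1+u_5-u_6)$ and $(u_1+u_3+u_5)\wedge(u_2-u_4+2u_6)$. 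Checking membership in $S^2$ and linear independence (using that $p$ is odd) is a finite verification, and gives $S_{\dec}^2=S^2$, hence $\br_{\rm nr}(\bm{C}(G))=\{0\}$.

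\emph{Step 2.} Expanding the $18$ forms $\omega_m\wedge u_k^*$ ($1\le m\le 3$, $1\le k\le 6$) in the monomial basis of $\wedge^3U^*$ one checks they are linearly independent, so $\dim K^3=18$ and $\dim S^3=2$. To identify $S^3$, use that $w\in\wedge^3U$ lies in $(K^3)^\bot$ iff each contraction $\iota_{u_m^*}w$ ($1\le m\le 6$) lies in $S^2$; imposing the three defining relations of $S^2$ on every $\iota_{u_m^*}w$ and solving the resulting linear system gives
\[
S^3=\bigl\langle\ u_{156},\quad u_{146}-u_{135}-u_{256}\ \bigr\rangle .
\]
Since $S_{\dec}^3=\sum_{u\in U}\bigl(S^3\cap(u\wedge\wedge^2U)\bigr)$, it remains to see that $u_{156}=u_1\wedge u_5\wedge u_6$ is decomposable (so $u_{156}\in S_{\dec}^3$) while for every scalar $\alpha$ the $3$-vector $u_{146}-u_{135}-u_{256}+\alpha\,u_{156}$ is divisible by no nonzero vector of $U$: writing its exterior product with a general $u\in U$ as an element of $\wedge^4U$, the condition that it vanish forces $u=0$. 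Hence $S_{\dec}^3=\langle u_{156}\rangle\subsetneq S^3$, so $K_{\max}^3/K^3$ is one-dimensional and nonzero and $H_{\rm nr}^3(\bm{C}(G),\bm{Q}/\bm{Z})\ne\{0\}$.

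The only substantive point is the final step: that the ``extra'' generator $u_{146}-u_{135}-u_{256}$ of $S^3$ is indecomposable, i.e.\ divisible by no vector. This is precisely the property the commutator relations $(2)$ — equivalently the three alternating forms $\omega_1,\omega_2,\omega_3$ — are designed to produce, and I expect it to be the main obstacle; everything else reduces to finite linear algebra over $\bm{F}_p$, the one subtlety (in Step 1) being that one isotropic plane must be spanned by vectors with three nonzero coordinates, which is where $p$ being odd is used.
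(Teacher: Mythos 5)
Your proposal is correct and follows essentially the same route as the paper's proof: reduce via Theorem \ref{t2.3} and the duality of Definition \ref{d2.2} to showing $S^2_{\dec}=S^2$ and $S^3_{\dec}\subsetneq S^3$, exhibit twelve decomposable bivectors spanning $S^2$, compute $\dim K^3=18$ so that $S^3$ is two-dimensional (your generators agree with the paper's $w_1=u_1\wedge u_5\wedge u_6$ and $w_2$ up to sign), and use the criterion of Lemma \ref{l2.5} ($w\wedge u_0=0$ for some $u_0\ne 0$) to show no element of $S^3$ outside $\langle u_{156}\rangle$ is divisible by a vector. The only differences are cosmetic: you choose slightly different isotropic planes in Step 1 and characterize $(K^3)^\bot$ via contractions rather than by listing the eighteen generators explicitly.
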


\begin{proof}
Because of Theorem \ref{t2.3}, it suffices to show that
$K_{\max}^2/K^2=\{0\}$ and $K_{\max}^3/K^3\ne \{0\}$ (remember
that we write $V=\langle v_1,v_2,v_3\rangle$, $U=\langle u_i:1\le
i\le 6\rangle$ and $0\to V \xrightarrow{\iota} G\xrightarrow{\pi}
U\to 0$).

Since $K_{\max}^2$ and $K^2$ are dual to $S_{\dec}^2$ and $S^2$, it is enough to sow that $S_{\dec}^2=S^2$.
Similarly, it is enough to show that $S_{\dec}^3 \subsetneq S^3$.

\bigskip
Step 1.
Let $\{u_j^*:1\le j\le 6\}$ be the dual basis of $\{u_j:1\le j\le 6\}$, and $\{v_i^*:1\le i\le 3\}$ be the dual basis of $\{v_i:1\le i\le 3\}$.

By the definition of the group $G$, the map $\gamma:\wedge^2 U\to V$ is defined by $\gamma (u_1 \wedge u_2)=\gamma (u_3\wedge u_4)=v_1$,
$\gamma(u_1\wedge u_4)=\gamma(u_2\wedge u_5)=\gamma(u_3\wedge u_6)=v_2$, $\gamma(u_3\wedge u_5)=\gamma(u_4\wedge u_6)=v_3$,
and $\gamma(u_i\wedge u_j)=0$ for the remaining $u_i\wedge u_j$ ($1\le i< j\le 6$).

It is easy to verify that $\gamma^*:V^* \to \wedge^2 U^*$ is given by $\gamma^*(v_1^*)=u_1^*\wedge u_2^*+u_3^*\wedge u_4^*$,
$\gamma^*(v_2^*)=u_1^*\wedge u_4^*+u_2^*\wedge u_5^*+u_3^*\wedge u_6^*$,
$\gamma^*(v_3^*)=u_3^*\wedge u_5^*+u_4^*\wedge u_6^*$.

It follows that $K^2=\gamma^*(V^*)=\langle u_1^*\wedge u_2^*+u_3^*\wedge u_4^*$, $u_1^*\wedge u_4^*+u_2^*\wedge u_5^*+u_3^*\wedge u_6^*$,
$u_3^*\wedge u_5^*+u_4^*\wedge u_6^*\rangle$.

\bigskip
Step 2.
We will show that $S^2=S_{\dec}^2$.

Note that $S^2=(K^2)^\bot$ and $\dim_{\bm{F}_p}
S_2=\dim_{\bm{F}_p} (\wedge^2 U^*)-\dim_{\bm{F}_p} K^2=12$. We
will find a basis of $S^2$.

For the convenience of notation, we will write $(1,2)$ for $u_1
\wedge u_2$; thus $(1,2)-(3,4)$ denotes $u_1\wedge u_2-u_3\wedge
u_4$.

Since the three basis elements of $K^2$ have been found,
it is not difficult to verify the following elements belong to $S^2=(K^2)^\bot$:
\begin{gather} \label{q2}
\begin{split}
& (1,2)-(3,4),~ (1,3),~ (1,4)-(2,5),~ (1,5),~ (1,6),~ (2,3),~ (2,4), \\
& (2,5)-(3,6),~ (2,6),~ (3,5)-(4,6),~ (4,5),~ (5,6).
\end{split}
\end{gather}

They are 12 linearly independent elements in $S^2$.
Hence they are the basis elements of $S^2$.

Among the 12 vectors in Formula \eqref{q2}, except for
$(1,2)-(3,4)$, $(1,4)-(2,5)$, $(2,5)-(3,6)$ and $(3,5)-(4,6)$, the
remaining vectors (e.g.\ $(1,3)$, $(1,5)$, etc.) obviously belong
to $S_{\dec}^2$. We will show that the ``exceptional" four vectors
also belong to $S_{\dec}^2$.

Note that $(u_1+u_4)\wedge (u_2+u_3)=[(1,2)-(3,4)]+(1,3)-(2,4)\in S^2$.
Thus $(u_1+u_4)\wedge (u_2+u_3) \in S_{\dec}^2$.
It follows that $(1,2)-(3,4)\in S_{\dec}^2$.

Similarly, use the formula $(u_1+u_2)\wedge (u_4-u_5)=[(1,4)-(2,5)]-(1,5)+(2,4)$.
We find $(1,4)-(2,5)\in S_{\dec}^2$.
Use the formula $(u_2+u_6)\wedge (u_3+u_5)=[(2,5)-(3,6)]+(2,3)-(5,6)$.
We find that $(2,5)-(3,6)\in S_{\dec}^2$.

Finally, $(u_2+u_3+u_4)\wedge
(u_5-u_6)=[(3,5)-(4,6)]+[(2,5)-(3,6)]-(2,6)+(4,5)$. Since we have
shown that $(2,5)-(3,6)\in S_{\dec}^2$, it follows that
$(3,5)-(4,6)\in S_{\dec}^2$ also. Done.

\bigskip
Step 3. We will show that $S_{\dec}^3 \subsetneq S^3$. In fact, we
will show that $\dim_{\bm{F}_p} S^3/S_{\dec}^3=1$.

By Step 1, $\gamma^*(V^*)=\langle f_1,f_2,f_3\rangle$ where $f_1=u_1^*\wedge u_2^*+u_3^*\wedge u_4^*$,
$f_2=u_1^*\wedge u_4^*+u_2^*\wedge u_5^*+u_3^*\wedge u_6^*$,
$f_3=u_3^*\wedge u_5^*+u_4^*\wedge u_6^*\rangle$.
Thus $K^3=\gamma^*(V^*)\wedge U^*=\langle f_i\wedge u_j^*: 1\le i\le 3, 1\le j\le 6\rangle$.

We will write $[i,j,k]$ for $u_i^*\wedge u_j^*\wedge u_k^*$. Write
down explicitly the eighteen generators $f_i\wedge u_j^*$ ($1\le
i\le 3$, $1\le j\le 6$) as elements in $A\cup B$ where $A$ is the
set consisting of the vectors
\[
[1,2,3],~ [1,2,4],~ [1,3,4],~ [2,3,4],~ [3,4,5],~ [3,4,6],~ [3,5,6],~ [4,5,6],
\]
and $B$ is the set consisting of the vectors
\begin{align*}
& [1,2,5]+[3,4,5],~[1,2,6]+[3,4,6],~[1,3,5]+[1,4,6], \\
& [1,2,5]+[1,3,6],~ -[1,2,4]+[2,3,6],~ -[1,3,4]-[2,3,5], \\
& [2,3,5]+[2,4,6],~ [2,4,5]+[3,4,6],~[1,4,5]-[3,5,6],~ [1,4,6]+[2,5,6].
\end{align*}

Since $[3,4,6]\in A$, the generator $[1,2,6]+[3,4,6]$ in $B$ may
be replaced by $[1,2,6]$. Simplify the vectors of $B$ by this way.
We find that $K^3$ is generated by vectors in the following set
\begin{equation} \label{q3}
(\{ [i,j,k]: 1\le i<j<k \le 6\}\backslash C)\cup D
\end{equation}
where $C=\{[1,3,5],[1,4,6],[1,5,6],[2,5,6]\}$ and $D=\{[1,3,5]+[1,4,6],[1,4,6]+[2,5,6]\}$.

The 18 vectors in Formula \eqref{q3} are linearly independent over $\bm{F}_p$.
Hence $\dim_{\bm{F}_p} K^3=18$ and $\dim_{\bm{F}_p} S^3=\dim_{\bm{F}_p} (\wedge^3 U^*)-\dim_{\bm{F}_p} K^3=2$.

It is clear that $w_1,w_2\in (K^3)^\bot$ where $w_1=u_1\wedge
u_5\wedge u_6$ and $w_2=u_1\wedge u_3\wedge u_5-u_1\wedge
u_4\wedge u_6+u_2\wedge u_5\wedge u_6$. Since $\dim_{\bm{F}_p}
S^3=2$, it follows that $S^3=\langle w_1,w_2\rangle$.

\bigskip
Step 4.
We will show that $S_{\dec}^3=\langle w_1\rangle$, which will finish the proof that $\dim_{\bm{F}_p} S^3/S_{\dec}^3\allowbreak =1$.

Recall that $S_{\dec}^3=\langle u'\wedge u\in S^3:u'\in \wedge^2 U,u\in U\rangle$.
Elements in $S^3$ of the form $u'\wedge u$ where $u'\in \wedge^2 U$ and $u\in U$ will be called eligible elements of $S_{\dec}^3$.
We will show that, up to a scalar multiple, $w_1$ is the only one eligible element of $S_{\dec}^3$.
This will finish the proof that $S_{\dec}^3=\langle w_1\rangle$.

Suppose that $w\in S_{\dec}^3$ is a non-zero eligible element.
Then $w\in S^3=\langle w_1,w_2\rangle$ and $w=u'\wedge u$ for some $u'\in \wedge^2 U$ and $u\in U$.
Write $w=a_1\cdot w_1+a_2\cdot w_2$ for some $a_1,a_2\in \bm{F}_p$.
We will show that $a_2=0$.

Since $w=u'\wedge u$ for some $u'\in \wedge^2 U$ and $u\in U$, apply the following Lemma \ref{l2.5}.
It is necessary that $w\wedge u_0=0$ for some non-zero vector $u_0\in U$.
Write $u_0=\sum_{1\le j\le 6} b_j \cdot u_j$ where $b_j\in \bm{F}_p$.
Expand the relation $(\sum_{1\le i\le 2} a_i\cdot w_i)\wedge (\sum_{1\le j\le 6} b_j\cdot u_j)=0$.
A non-trivial solution for $(a_1,a_2,b_1,\ldots,b_6)$ is of the following form
\[
(0,0,b_1,b_2,\ldots,b_6),~(a_1,a_2,0,0,0,0,0,0),~(a_1,0,b_1,0,0,0,b_5,b_6).
\]
If we require that $a_1w_1+a_2w_2\ne 0$ and $\sum_{1\le j\le 6} b_j\cdot u_j\ne 0$,
it is necessary that $a_2=0$ as we expected before.
\end{proof}

\begin{lemma}[{\cite[page 265]{Pe1}}] \label{l2.5}
Let $d$ be a positive integer and $U$ be a vector space over
$\bm{F}_p$ such that $d\le \dim_{\bm{F}_p}U$. Suppose that $w\in
\wedge^d U$ is a non-zero vector. Then $w=u'\wedge u$ for some
$u'\in \wedge^{d-1} U$ and $u\in U$ if and only if there is a
non-zero vector $u_0$ such that $w\wedge u_0=0$.
\end{lemma}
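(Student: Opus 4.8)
The plan is to prove Lemma \ref{l2.5} by analyzing the left-multiplication map $L_w\colon U\to\wedge^{d+1}U$ sending $u_0\mapsto w\wedge u_0$, and to reduce the ``decomposability'' of $w$ into a wedge with a vector to the existence of a nonzero kernel element. The ``if'' direction is the substantive one; the ``only if'' direction is trivial, since if $w=u'\wedge u$ then $w\wedge u=u'\wedge u\wedge u=0$, so $u_0:=u$ works.

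For the ``if'' direction, suppose $u_0\neq 0$ satisfies $w\wedge u_0=0$. The key step is a change of basis: extend $u_0$ to a basis $u_0,e_2,\dots,e_n$ of $U$, and write $w$ in the induced basis of $\wedge^d U$. Group the basis $d$-vectors according to whether they contain the factor $u_0$ or not, so that $w=u_0\wedge \alpha + \beta$ where $\alpha\in\wedge^{d-1}W$ and $\beta\in\wedge^d W$ with $W=\langle e_2,\dots,e_n\rangle$ the complement. Then $w\wedge u_0 = \beta\wedge u_0$ (the $u_0\wedge\alpha\wedge u_0$ term vanishes), and since wedging with $u_0$ is injective on $\wedge^d W$ (as $u_0\notin W$), the hypothesis $w\wedge u_0=0$ forces $\beta=0$. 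Hence $w=u_0\wedge\alpha$, which is exactly of the desired form $u'\wedge u$ with $u'=\pm\alpha\in\wedge^{d-1}U$ and $u=u_0\in U$. The case $w=0$ is excluded by hypothesis, but one should note that if no such $u_0$ exists the statement is vacuous on that side; also $d\le\dim_{\bm F_p}U$ guarantees $\wedge^d U\neq 0$ so the setup is non-degenerate.

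I expect the main (minor) obstacle to be bookkeeping with signs and the direct-sum decomposition $\wedge^d U = (u_0\wedge\wedge^{d-1}W)\oplus\wedge^d W$: one must justify that this is genuinely a direct sum and that wedging with $u_0$ kills the first summand while injecting the second. This is standard multilinear algebra — it follows from choosing the basis of $\wedge^d U$ consisting of $u_0\wedge e_{i_2}\wedge\cdots\wedge e_{i_d}$ together with $e_{i_1}\wedge\cdots\wedge e_{i_d}$ (all indices $\ge 2$) — but it is the part that needs to be written carefully. Everything else is a one-line consequence. Since the lemma is attributed to \cite[page 265]{Pe1}, an alternative acceptable route is simply to cite it; but for completeness the elementary argument above is short and self-contained, so I would include it rather than rely solely on the citation.
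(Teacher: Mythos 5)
Your proof is correct and follows essentially the same route as the paper's: the paper also extends $u_0$ to a basis and observes that $w\wedge u_0=0$ forces the vanishing of every basis monomial of $w$ not containing $u_0$, which is exactly your decomposition $w=u_0\wedge\alpha+\beta$ with $\beta=0$. The only cosmetic point is to note in the forward direction that $u\neq 0$ (forced by $w\neq 0$), so that $u_0:=u$ is indeed a nonzero vector.
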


\begin{proof}
$\Rightarrow$ If $w=u'\wedge u$, then $u\ne 0$. Thus $w\wedge u=0$.

$\Leftarrow$ Suppose $u_0$ is a non-zero vector and $w\wedge u_0=0$.
Let $u_1,u_2,\ldots,u_n$ be a basis of $U$ with $u_1=u_0$.
Write $w=\sum_\lambda a_\lambda u_{\lambda_1} \wedge u_{\lambda_2} \wedge \cdots \wedge u_{\lambda_d}$ where $\lambda$ runs over the ordered $d$-subsets
$(\lambda_1,\lambda_2,\ldots,\lambda_d)$ with $\lambda_1<\lambda_2<\cdots<\lambda_d$ and $a_\lambda\in\bm{F}_p$.

If $w\wedge u_1=0$, then $a_\lambda=0$ if $\lambda=(\lambda_1,\ldots,\lambda_d)$ with $\lambda_1\ge 2$.
Hence we may write $w=u'\wedge u_1$ for some $u'\in \wedge^{d-1}U$.
\end{proof}

\begin{theorem} \label{t2.6}
Let $p$ be an odd prime number, $t\in \bm{F}_p \backslash \{0\}$.
Let $G$ be the $p$-group of exponent $p$ defined by $G=\langle v_i,u_j:1\le i\le 3, 1\le j\le 6\rangle$ satisfying the following conditions

$(1)$ $Z(G)=[G,G]=\langle v_1,v_2,v_3\rangle$, and

$(2)$ $[u_1,u_2]=[u_4,u_5]^{-1}=v_1$,
$[u_2,u_3]=[u_5,u_6]^{-1}=[u_1,u_4]=v_2$,
$[u_3,u_6]=[u_2,u_4]^{-1}=v_3$, $[u_1,u_5]=v_3^t$, and the other
unlisted commutators are equal to the identity element of $G$.

Then ${\rm Br}_{\rm nr}(\bm{C}(G))=\{0\}$. Moreover,
$K_{\max}^3/K^3\ne \{0\}$ if and only if $t\in \bm{F}_p \backslash
\bm{F}_p^2$. If $K_{\max}^3/K^3 \ne \{0\}$, then $\dim_{\bm{F}_p}
K_{\max}^3/K^3=2$.
\end{theorem}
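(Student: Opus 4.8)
The plan is to run the two‑part argument from the proof of Theorem~\ref{t2.4}, reducing everything to linear algebra over $\bm{F}_p$ via Theorem~\ref{t2.3}: it suffices to prove $S_{\dec}^2=S^2$ (which forces $\br_{\rm nr}(\bm{C}(G))=\{0\}$) and to compute $\dim_{\bm{F}_p}S^3/S_{\dec}^3$, which equals $\dim_{\bm{F}_p}K_{\max}^3/K^3$ because $K_{\max}^3$ and $K^3$ are the orthogonal complements of $S_{\dec}^3$ and $S^3$ in the pairing of Definition~\ref{d2.1}. The preliminary step is to read off from condition~(2) the map $\gamma\colon\wedge^2 U\to V$ and its transpose $\gamma^*\colon V^*\to\wedge^2 U^*$, and hence explicit generators $f_1,f_2,f_3$ of $K^2=\gamma^*(V^*)$ and the eighteen generators $f_i\wedge u_j^*$ of $K^3$.

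For the Brauer group, $S^2=(K^2)^\bot=\ker\gamma$ has dimension $\binom{6}{2}-3=12$, and I would write down a basis. Seven basis vectors are already of the form $u_i\wedge u_j$ with $[u_i,u_j]=1$, hence lie in $S_{\dec}^2$; the remaining five are ``mixed'', of shape $u_iu_j\pm u_ku_l$ or $u_iu_j+t\,u_ku_l$. As in Step~2 of the proof of Theorem~\ref{t2.4}, each mixed vector must be produced as an $\bm{F}_p$‑combination of decomposable vectors lying in $\ker\gamma$; with the parameter $t$ present one is led to use less obvious products, for instance $(u_1+u_3+u_4)\wedge(u_2+u_4+u_5-(t+1)u_6)\in\ker\gamma$, whose correction terms are combinations of the remaining basis vectors, thereby reducing matters to finitely many similar verifications that go through for every $t\ne 0$.

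For the degree‑three part, among the eighteen generators $f_i\wedge u_j^*$ of $K^3$ twelve reduce, after subtracting generators already known to lie in $K^3$, to twelve distinct basis monomials $u_i^*\wedge u_j^*\wedge u_k^*$, and the remaining six involve only the complementary eight monomials. Hence $\dim K^3=18$ and $\dim S^3=\binom{6}{3}-18=2$; solving the six linear conditions yields an explicit basis $\{w_1,w_2\}$ of $S^3$, both vectors depending on $t$. The structural fact I would exploit is that, with $W=\langle u_2,u_3,u_5,u_6\rangle$, every $w\in S^3$ can be written $w=u_1\wedge R+u_4\wedge S$ with $R,S\in\wedge^2 W$ depending linearly on the coordinates $(a,b)$ of $w$ in the basis $\{w_1,w_2\}$.

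The decomposability analysis is the main obstacle. By Lemma~\ref{l2.5}, a nonzero $w\in S^3$ has a linear factor if and only if $w\wedge u_0=0$ for some nonzero $u_0\in U$. Writing $u_0=\alpha u_1+\beta u_4+z$ with $z\in W$ and expanding $w\wedge u_0$ in the decomposition $\wedge^4 U=(u_1\wedge u_4\wedge\wedge^2 W)\oplus(u_1\wedge\wedge^3 W)\oplus(u_4\wedge\wedge^3 W)\oplus\wedge^4 W$, the equation $w\wedge u_0=0$ breaks into $\beta R-\alpha S=0$, $R\wedge z=0$ and $S\wedge z=0$. Since $R$ and $S$ are never zero for $w\ne0$, each of these possibilities forces the Pfaffian of $R$ (equivalently of $S$) to vanish, and a direct computation identifies that Pfaffian, up to a nonzero scalar, with a binary quadratic form in $(a,b)$ that is anisotropic over $\bm{F}_p$ precisely when $t$ is a non‑square. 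Consequently, when $t$ is a square $S^3$ contains two linearly independent decomposable vectors $w_1\pm s\,w_2$ (so $S_{\dec}^3=S^3$ and $K_{\max}^3/K^3=\{0\}$), while when $t$ is a non‑square $S^3$ contains no nonzero decomposable vector (so $S_{\dec}^3=\{0\}$ and $K_{\max}^3/K^3\simeq S^3$ has dimension $2$). The two points I expect to be fussy are keeping the wedge‑product signs consistent throughout and matching the precise square/non‑square normalisation to the statement, and verifying that — unlike in Theorem~\ref{t2.4} — $S_{\dec}^3$ is never one‑dimensional, which here is automatic since a decomposable line in $S^3$ always comes paired with its ``mirror'' line.
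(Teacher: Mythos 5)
Your proposal is correct and follows essentially the same route as the paper: reduce via Theorem \ref{t2.3} and the duality to showing $S^2=S_{\dec}^2$ (by exhibiting decomposable elements of $\ker\gamma$ whose correction terms are monomials already in $S_{\dec}^2$) and to computing $\dim_{\bm{F}_p}S^3/S_{\dec}^3$ from $\dim K^3=18$, an explicit basis $\{w_1,w_2\}$ of $S^3$, and Lemma \ref{l2.5}. Your only real departure is in packaging the final step — writing $w=u_1\wedge R+u_4\wedge S$ over $W=\langle u_2,u_3,u_5,u_6\rangle$ and reducing eligibility to the vanishing of the binary form $a^2-tb^2$ (the Pfaffian of $R$) — which is a cleaner conceptual explanation of the square/non-square dichotomy than the paper's direct enumeration of solutions of $(\sum_i a_iw_i)\wedge(\sum_j b_ju_j)=0$, but it is the same underlying computation.
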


\begin{proof}
The proof is similar to that of Theorem \ref{t2.4}.

Step 1.
Let $\gamma:\wedge^2 U\to V$ and $\gamma^*:V^*\to \wedge^2 U^*$ be the maps.
Then $K^2=\gamma^2(V^*)=\langle f_1,f_2,f_3\rangle$ where $f_1=u_1^*\wedge u_2^*-u_4^*\wedge u_5^*$,
$f_2=u_2^*\wedge u_3^*-u_5^*\wedge u_6^*+u_1^*\wedge u_4^*$,
$f_3=u_3^*\wedge u_6^*-tu_1^*\wedge u_5^*-u_2^*\wedge u_4^*$.

We adopt the abbreviation $(i,j)=u_i\wedge u_j$ for $1\le i< j\le 6$.

Then $S^2=(K^2)^\bot$ is generated by the 12 basis elements
\[
(1,3),~ (1,6),~ (2,5),~ (2,6),~ (3,4),~ (3,5),~ (4,6)
\]
and
\[
(1,2)+(4,5),~ (1,4)+(5,6),~ (1,5)+t(3,6),~ (2,3)+(5,6),~ (2,4)+(3,6).
\]

Use the relations
\begin{align*}
(u_1+tu_3)\wedge (u_5+u_6) &= [(1,5)+t(3,6)]+(1,6)+t(3,5), \\
(u_2+u_5)\wedge (u_3+u_6) &= [(2,3)+(5,6)]+(2,6)-(3,5), \\
(u_2+u_3)\wedge (u_4+u_6) &= [(2,4)+(3,6)]+(2,6)+(3,4), \\
(u_1-u_6)\wedge (tu_3+u_4+u_5) &= [(1,4)+(5,6)]+[(1,5)+t(3,6)]+t(1,3)+(4,6), \\
(u_1+u_5)\wedge (u_2-u_4-u_6) &=
[(1,2)+(4,5)]-[(1,4)+(5,6)]-(1,6)-(2,5).
\end{align*}

It follows that $S^2=S_{\dec}^2$ and thus
$\br_{nr}(\bm{C}(G))=\{0\}$.

\bigskip
Step 2.
We will show that $\dim_{\bm{F}_p} S^3=2$, $\dim_{\bm{F}_p} S_{\dec}^3=0$ if $t\in \bm{F}_p \backslash \bm{F}_p^2$,
$\dim_{\bm{F}_p} S_{\dec}^3=2$ if $t\in \bm{F}_p^2$.

We will find a basis of $K^3=\gamma^*(V^*)\wedge U^*$. Use the
abbreviation $[i,j,k]=u_i^*\wedge u_j^*\wedge u_k^*$. Write down
explicitly the 18 generators $f_i\wedge u_j^*$ ($1\le i\le 3$,
$1\le j\le 6$) where $f_1$, $f_2$, $f_3$ are defined in Step 1.
They are the following vectors
\begin{align*}
& [1,4,5],~ [2,4,5],~ [1,2,4],~ [1,2,5], \\
& [1,2,3]-[3,4,5],~ [1,2,6]-[4,5,6],~ [1,2,3]-[1,5,6], \\
& [1,2,4]+[2,5,6],~ [1,3,4]+[3,5,6],~ [2,3,4]-[4,5,6], \\
& [1,4,5]+[2,3,5],~ [1,4,6]+[2,3,6],~ [1,2,4]-[1,3,6], \\
& t[1,2,5]+[2,3,6],~ t[1,3,5]+[2,3,4],~ t[1,4,5]-[3,4,6], \\
& [2,4,5]+[3,5,6],~ t[1,5,6]+[2,4,6].
\end{align*}

Simplify the above vectors as Step 3 in the proof of Theorem
\ref{t2.4}. We get the following vectors
\begin{gather} \label{q4}
\begin{split}
& [1,2,4],~ [1,2,5],~ [1,3,4],~ [1,3,6],~ [1,4,5],~ [1,4,6], \\
& [2,3,5],~ [2,3,6],~ [2,4,5],~ [2,5,6],~ [3,4,6],~ [3,5,6], \\
& [1,2,3]-[3,4,5],~ [1,2,3]-[1,5,6],~ [1,2,6]-[4,5,6], \\
& [2,3,4]-[4,5,6],~ t[1,3,5]+[2,3,4],~ t[1,5,6]+[2,4,6].
\end{split}
\end{gather}

It is not difficult to verify that vectors in Formula \eqref{q4} are linearly independent over $\bm{F}_p$.
Hence $\dim_{\bm{F}_p} K^3=18$.
Thus $\dim_{\bm{F}_p} S^3=2$.

Define $w_1,w_2\in \wedge^3 U$ by
\begin{align*}
w_1 &= u_1\wedge u_2\wedge u_3+u_1\wedge u_5\wedge u_6-tu_2\wedge u_4\wedge u_6+u_3\wedge u_4\wedge u_5, \\
w_2 &= tu_1\wedge u_2\wedge u_6+tu_2\wedge u_3\wedge u_4+tu_4\wedge u_5\wedge u_6-u_1\wedge u_3\wedge u_5.
\end{align*}

Clearly $w_1,w_2\in (K^3)^\bot=S^3$.
It follows that $S^3=\langle w_1,w_2\rangle$.

\bigskip
Step 3. We will calculate $S_{\dec}^3$.

As in Step 4 of the proof of Theorem \ref{t2.4},
we call an element $w\in S_{\dec}^3$ an eligible element if $w\in S^3$ and $w=u'\wedge u$ for some $u'\in \wedge^2 U$ and $u\in U$.

If a non-zero vector $w\in S_{\dec}^3$ is eligible, write $w=a_1w_1+a_2w_2$ where $a_1,a_2\in \bm{F}_p$.
Apply Lemma \ref{l2.5}, there is a non-zero vector $u_0\in U$ such that $w\wedge u_0=0$.
Write $u_0=\sum_{1\le j\le 6} b_j\cdot u_j$ where $b_j \in \bm{F}_p$.
Find the non-trivial solutions $(a_1,a_2,b_1,b_2,b_3,b_4,b_5,b_4)$ satisfying $(\sum_{1\le i\le 2} a_iw_i)\wedge (\sum_{1\le j\le 6} b_ju_j)=0$.

If $t\in \bm{F}_p\backslash \bm{F}_p^2$,
the only non-trivial solutions of $(a_1,a_2,b_1,b_2,b_3,b_4,b_5,b_6)$ are $(a_1,a_2,\allowbreak 0,0,0,0,0,0)$ and $(0,0,b_1,b_2,b_3,b_4,b_5,b_6)$.
Thus no non-zero eligible elements exist at all.
Hence $S_{\dec}^3=\{0\}$.

If $t\in\bm{F}_p^2$, write $t=1/c^2$ where $c\in \bm{F}_p\backslash \{0\}$.
The non-trivial solutions of $(a_1,a_2,b_1,\allowbreak b_2,b_3,b_4,b_5,b_6)$ are $(a_1,a_2,0,0,0,0,0,0)$, $(0,0,b_1,b_2,b_3,b_4,b_5,b_6)$ and
\[
(a,\varepsilon ac,\varepsilon b_1c,b_2,\varepsilon b_3 c, b_1,\varepsilon b_2 c, b_3)
\]
where $a\in\bm{F}_p\backslash \{0\}$, $\varepsilon\in \{1,-1\}$, $(b_1,b_2,b_3)$ is a non-zero vector in $\bm{F}_p^3$.
In conclusion, there are essentially two eligible elements $w_1+cw_2$ and $w_1-cw_2$.
Thus $S_{\dec}^3=\langle w_1+cw_2,\allowbreak w_1-cw_2\rangle=\langle w_1,w_2\rangle=S^3$.
\end{proof}

The degenerate case $t=0$ can be proved similarly. We record it as
the following theorem.

\begin{theorem} \label{t2.7}
Let $p$ be an odd prime number. Let $G$ be the $p$-group of
exponent $p$ defined by $G=\langle v_i,u_j:1\le i\le 3, 1\le j\le
6\rangle$ satisfying the following conditions

$(1)$ $Z(G)=[G,G]=\langle v_1,v_2,v_3\rangle$, and

$(2)$ $[u_1,u_2]=[u_4,u_5]^{-1}=v_1$,
$[u_2,u_3]=[u_5,u_6]^{-1}=[u_1,u_4]=v_2$,
$[u_3,u_6]=[u_2,u_4]^{-1}=v_3$, and the other unlisted commutators
are equal to the identity element of $G$.

Then ${\rm Br}_{\rm nr}(\bm{C}(G))=\{0\}$ and $K_{\max}^3/K^3\ne
\{0\}$. In fact, $\dim_{\bm{F}_p} K_{\max}^3/K^3=1$.
\end{theorem}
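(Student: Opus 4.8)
The plan is to apply Theorem \ref{t2.3} and reduce everything to linear algebra in $\wedge^2 U$ and $\wedge^3 U$, exactly as in the proofs of Theorems \ref{t2.4} and \ref{t2.6}; indeed the present $G$ is the $t=0$ specialization of the family in Theorem \ref{t2.6}, so the whole computation should be obtained by setting $t=0$ and tracking how the answer degenerates. Since $K_{\max}^2, K^2$ (resp.\ $K_{\max}^3, K^3$) are the orthogonal complements of $S_{\dec}^2, S^2$ (resp.\ $S_{\dec}^3, S^3$) for the pairing of Definition \ref{d2.1}, it suffices to prove that $S^2 = S_{\dec}^2$ and that $\dim_{\bm{F}_p} S^3/S_{\dec}^3 = 1$.

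For the Brauer group, setting $t=0$ in Step 1 of Theorem \ref{t2.6} gives $K^2 = \gamma^*(V^*) = \langle f_1, f_2, f_3\rangle$ with $f_1 = u_1^*\wedge u_2^* - u_4^*\wedge u_5^*$, $f_2 = u_2^*\wedge u_3^* - u_5^*\wedge u_6^* + u_1^*\wedge u_4^*$, $f_3 = u_3^*\wedge u_6^* - u_2^*\wedge u_4^*$, so $\dim_{\bm{F}_p} S^2 = 12$; writing $(i,j)$ for $u_i\wedge u_j$, a basis of $S^2 = (K^2)^\bot$ should be $(1,3),(1,5),(1,6),(2,5),(2,6),(3,4),(3,5),(4,6)$ together with $(1,2)+(4,5)$, $(1,4)+(5,6)$, $(2,3)+(5,6)$, $(2,4)+(3,6)$ --- note that $(1,5)$ now occurs alone, whereas for $t\neq 0$ it appeared only in $(1,5)+t(3,6)$. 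The eight monomials lie in $S_{\dec}^2$ trivially, and I would handle the four two-term vectors by the $t=0$ versions of the rank-two identities of Theorem \ref{t2.6}, for instance
\[
(u_2+u_5)\wedge(u_3+u_6)=[(2,3)+(5,6)]+(2,6)-(3,5),\qquad (u_1-u_6)\wedge(u_4+u_5)=[(1,4)+(5,6)]+(1,5)+(4,6),
\]
and similarly with $(u_2+u_3)\wedge(u_4+u_6)$ and $(u_1+u_5)\wedge(u_2-u_4-u_6)$; peeling off the monomial summands (already known to lie in $S_{\dec}^2$) then shows all four two-term vectors lie in $S_{\dec}^2$, so $S^2 = S_{\dec}^2$ and $\br_{\rm nr}(\bm{C}(G)) = \{0\}$.

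For the degree-three part I would specialize the eighteen generators $f_i\wedge u_j^*$ of $K^3 = \gamma^*(V^*)\wedge U^*$ at $t=0$, simplify them against the monomial basis $[i,j,k]$ of $\wedge^3 U^*$ as in Step 3 of Theorem \ref{t2.4}, and check they remain linearly independent; thus $\dim_{\bm{F}_p} K^3 = 18$ and $\dim_{\bm{F}_p} S^3 = 2$. One verifies directly that $w_1 = u_1\wedge u_2\wedge u_3 + u_1\wedge u_5\wedge u_6 + u_3\wedge u_4\wedge u_5$ and $w_2 = u_1\wedge u_3\wedge u_5$ lie in $(K^3)^\bot$, so $S^3 = \langle w_1, w_2\rangle$. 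Since $w_2 = (u_1\wedge u_3)\wedge u_5$ is already decomposable, $\langle w_2\rangle \subseteq S_{\dec}^3$, and the remaining task is to show that no eligible element $w = a_1 w_1 + a_2 w_2$ with $a_1\neq 0$ exists. By Lemma \ref{l2.5} such a $w$ admits a non-zero $u_0 = \sum_j b_j u_j$ with $w\wedge u_0 = 0$; expanding in $\wedge^4 U$ yields the system $a_1 b_2 = a_1 b_4 = a_1 b_6 = 0$, $a_1 b_5 + a_2 b_2 = 0$, $a_1 b_3 + a_2 b_6 = 0$, $a_1 b_1 + a_2 b_4 = 0$, which forces $b_1 = \cdots = b_6 = 0$ when $a_1\neq 0$, while for $a_1 = 0$ (and $a_2\neq 0$) it forces exactly $u_0 \in \langle u_1, u_3, u_5\rangle$, recovering $w = a_2 w_2$. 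Hence $S_{\dec}^3 = \langle w_2\rangle$ is one-dimensional, so $\dim_{\bm{F}_p} S^3/S_{\dec}^3 = 1$ and, dually, $\dim_{\bm{F}_p} K_{\max}^3/K^3 = 1\neq 0$.

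I expect this last step to be the main obstacle: one has to expand $(a_1 w_1 + a_2 w_2)\wedge u_0 = 0$ with correct signs over the nine relevant $4$-subsets and then classify all solutions $(a_1, a_2, b_1, \dots, b_6)$ subject to $a_1 w_1 + a_2 w_2 \neq 0$ and $u_0\neq 0$. In Theorem \ref{t2.6} the outcome of this classification hinged on whether $t$ is a square in $\bm{F}_p$, which is why $S_{\dec}^3$ there is either $0$ or all of $S^3$; at $t = 0$ that dichotomy degenerates and the dimension of $S_{\dec}^3$ lands exactly in between, so the bookkeeping has to be done carefully enough to pin it down to precisely $1$.
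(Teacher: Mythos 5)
Your proposal is correct and follows essentially the same route as the paper: the paper's own proof of Theorem \ref{t2.7} is exactly the $t=0$ specialization of Theorem \ref{t2.6} (it obtains the same $f_1,f_2,f_3$, the same $18$ generators of $K^3$, the same $w_1$, $w_2=u_1\wedge u_3\wedge u_5$, and concludes $S_{\dec}^3=\langle w_2\rangle$ by the same eligibility argument via Lemma \ref{l2.5}). Your explicit linear system $a_1b_2=a_1b_4=a_1b_6=0$, $a_1b_5+a_2b_2=a_1b_3+a_2b_6=a_1b_1+a_2b_4=0$ checks out and correctly pins down $\dim_{\bm{F}_p} S^3/S_{\dec}^3=1$.
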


\begin{proof}

Note that $\gamma^*(V^*)=\langle f_1,f_2,f_3\rangle$ where
$f_1=u_1^*\wedge u_2^*-u_4^*\wedge u_5^*$, $f_2=u_2^*\wedge
u_3^*-u_5^*\wedge u_6^*+u_1^*\wedge u_4^*$, $f_3=u_3^*\wedge
u_6^*-u_2^*\wedge u_4^*$.

It can be shown that $S^2=S_{\dec}^2$; a heuristic ``proof" is by
setting $t=0$ in Step 1 of the proof of Theorem \ref{t2.6}.

For the proof that $\dim_{\bm{F}_p} S^3/S_{\dec}^3=1$, it is not
difficult to show that $K^3$ is generated by
\begin{align*}
& [1,2,4],~ [1,2,5],~ [1,2,6],~ [1,3,4],~ [1,3,6],~ [1,4,5],~ [1,4,6], \\
& [2,3,4],~ [2,3,5],~ [2,3,6],~ [2,4,5],~ [2,4,6],~ [2,5,6],~ [3,4,6], \\
& [3,5,6],~ [4,5,6],~ [1,2,3]-[3,4,5],~ [1,2,3]-[1,5,6].
\end{align*}

Thus $S^3=\langle w_1,w_2\rangle$ where $w_1=u_1\wedge u_2\wedge
u_3+u_1\wedge u_5\wedge u_6+u_3\wedge u_4\wedge u_5$,
$w_2=u_1\wedge u_3\wedge u_5$. By the same method as above, we
find that $S_{\dec}^3=\langle w_2\rangle$.
\end{proof}

\section{Further remarks}

For any prime number $p$, an extraspecial $p$-group $G$ is a group $G$ such that $Z(G)=\langle v\rangle \simeq C_p$
and $G/\langle v\rangle$ is an elementary abelian group of order $p^{2n}$ where $n\ge 1$ \cite[pages 203--208]{Go}.
Thus $G$ may be presented as $0\to V\xrightarrow{\iota} G\xrightarrow{\pi} U\to 0$ where $V=\langle v\rangle$
and $U$ is a vector space over $\bm{F}_p$ with $\dim_{\bm{F}_p} U=2n$.
It can be shown that there are basis elements $u_1,u_2,\ldots, u_{2n}$ of $U$ such that,
within $G$, $[u_{2i-1},u_{2i}]=v$ for $1\le i\le n$ and $[u_{j,l}]=1$ if $l-j\ge 1$ and $(j,l)\ne (2i-1,2i)$ for some $i$.
From the above definition, the exponent of $G$ is $p$ or $p^2$.

For any prime number $p$, there are precisely two non-isomorphic
non-abelian groups $G$ with order $p^3$. Both of them are
extraspecial $p$-groups. It is known that $\bm{C}(G)$ is rational
over $\bm{C}$ for such groups $G$ \cite{CK}. Thus $\br_{\rm
nr}(\bm{C}(G))=\{0\}=H_{\rm nr}^d(\bm{C}(G),\bm{Q}/\bm{Z})$ for
all $d\ge 3$ by \cite[Proposition 1.2]{CTO}. A direct computation
for $H_{\rm nr}^3(\bm{C}(G),\bm{Q}/\bm{Z})=\{0\}$ may be found in
Black's paper \cite{Bl}.

If $p$ is an odd prime number and $G$ is an extraspecial $p$-group
of order $p^{2n+1}$, it is known that $\br_{\rm
nr}(\bm{C}(G))=\{0\}$ \cite{KK}. Now we turn to the degree three
unramified cohomology group.

\begin{prop} \label{p3.1}
Let $p$ be an odd prime number, $G$ be the extraspecial $p$-group of exponent $p$ and of order $p^{2n+1}$ where $n\ge 1$.
We present $G$ as $0\to V\xrightarrow{\iota} G\xrightarrow{\pi} U\to 0$ as above where $V=\langle v\rangle$, $U=\langle u_1,\ldots,u_{2n}\rangle$,
$[u_{2i-1},u_{2i}]=v$ for $1\le i\le n$.
Define $\gamma: \wedge^2 U\to V$ by Formula \eqref{q1} in Section 2.
Then $K_{\max}^3/K^3=\{0\}$.
\end{prop}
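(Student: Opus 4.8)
The plan is to recast everything as a claim about exterior powers of $U$ with its standard symplectic form, and then to exhibit by hand a spanning set of $S^3$ all of whose members are ``eligible'' in the sense of Step 4 in the proof of Theorem \ref{t2.4} (i.e.\ they lie in $S^3$ and have the form $u'\wedge u$ with $u'\in\wedge^2 U$, $u\in U$), so that $S^3\subseteq S_{\dec}^3$. First I would record the relevant maps: writing $P_k=\{2k-1,2k\}$, the map $\gamma$ of Formula \eqref{q1} sends $u_{2k-1}\wedge u_{2k}\mapsto v$ for $1\le k\le n$ and every other basis $2$-vector to $0$; dualizing, $\gamma^*(v^*)=\omega$ with $\omega:=\sum_{k=1}^{n}u_{2k-1}^*\wedge u_{2k}^*$, so that $K^2=\langle\omega\rangle$ and $K^3=\gamma^*(V^*)\wedge U^*=\omega\wedge U^*$. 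If $n=1$ then $\wedge^3 U=0$ and there is nothing to prove, so assume $n\ge 2$. Since the pairing of Definition \ref{d2.1} is non-degenerate and, by Definition \ref{d2.2}, $S^3=(K^3)^\bot$ and $K_{\max}^3=(S_{\dec}^3)^\bot$, taking orthogonal complements reduces the assertion $K_{\max}^3/K^3=\{0\}$ to the equality $S_{\dec}^3=S^3$; as $S_{\dec}^3\subseteq S^3$ always holds, only $S^3\subseteq S_{\dec}^3$ remains. It is convenient to let $\theta\colon\wedge^3 U\to U$ be the unique linear map with $\langle\langle\theta(w),f\rangle\rangle=\langle\langle w,\omega\wedge f\rangle\rangle$ for all $w\in\wedge^3 U$ and $f\in U^*$ (the adjoint of $f\mapsto\omega\wedge f$, i.e.\ contraction against $\omega$); then by non-degeneracy $S^3=(\omega\wedge U^*)^\bot=\ker\theta$.

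The heart of the argument is to compute $\theta$ on the monomial basis $\{u_a\wedge u_b\wedge u_c:1\le a<b<c\le 2n\}$ and read off $\ker\theta$. Since the monomial bases of $\wedge^\bullet U$ and $\wedge^\bullet U^*$ are dual to one another under Definition \ref{d2.1}, a short computation gives $\theta(u_a\wedge u_b\wedge u_c)=\pm u_e$ whenever $\{a,b,c\}=P_k\cup\{e\}$ for some (necessarily unique) $k$, and $\theta(u_a\wedge u_b\wedge u_c)=0$ otherwise. Call a basis monomial \emph{free} if none of the three index-pairs it involves is some $P_k$, and \emph{symplectic} otherwise; a symplectic monomial involves exactly one $P_k$, and I call its third index the \emph{residual index}. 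The monomial basis then splits into the free monomials $F$ and, for each index $d$, the symplectic monomials with residual index $d$, which span a subspace $W_d$; since $\theta$ annihilates $F$ and maps $W_d$ into $\langle u_d\rangle$ (each spanning monomial to a nonzero multiple of $u_d$), and the lines $\langle u_d\rangle$ are independent in $U$, one gets $S^3=\ker\theta=F\oplus\bigoplus_d\ker(\theta|_{W_d})$.

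Finally I would check that every member of an obvious spanning set of this direct sum is eligible. A free monomial $u_a\wedge u_b\wedge u_c$ lies in $\ker\theta=S^3$ and equals $(u_a\wedge u_b)\wedge u_c$, hence is eligible. For fixed $d$, the restriction $\theta|_{W_d}$ surjects onto the one-dimensional space $\langle u_d\rangle$, carrying each spanning monomial $u_{2i-1}\wedge u_{2i}\wedge u_d$ (with $d\notin P_i$) to a nonzero multiple of $u_d$; therefore $\ker(\theta|_{W_d})$ is spanned by two-term combinations $\alpha\,u_{2i-1}\wedge u_{2i}\wedge u_d+\beta\,u_{2j-1}\wedge u_{2j}\wedge u_d$ with $i\ne j$ and $\alpha,\beta\in\bm{F}_p\setminus\{0\}$, and each such combination equals $(\alpha\,u_{2i-1}\wedge u_{2i}+\beta\,u_{2j-1}\wedge u_{2j})\wedge u_d\in S^3$, which is again eligible. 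Hence this spanning set of $S^3$ lies in $S_{\dec}^3$, so $S^3\subseteq S_{\dec}^3$, whence $S^3=S_{\dec}^3$ and $K_{\max}^3/K^3=\{0\}$. The step needing real care is the explicit determination of $\theta$ on monomials, signs included; but those signs are irrelevant to the rest, since the final step only uses that $\theta|_{W_d}$ maps onto a one-dimensional space. (For $n=2$ there are no free monomials and each $W_d$ is one-dimensional, so $\theta$ is injective and $S^3=0$; the genuine content is for $n\ge 3$.)
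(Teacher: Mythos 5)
Your proof is correct and follows essentially the same route as the paper's: your free monomials are the paper's set $A$, and the two-term elements spanning $\ker(\theta|_{W_d})$ span the same space as the paper's $B_1\cup B_2$. The only difference is organizational — you verify that these eligible elements span $S^3$ by computing $\ker\theta$ directly through the direct-sum decomposition, whereas the paper exhibits the same elements and confirms they span via the dimension count $\dim_{\bm{F}_p}(\wedge^3 U^*)-2n=2^3\binom{n}{3}+2n(n-2)$.
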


\begin{proof}
Let $u_1^*,\ldots,u_{2n}^*$ be the dual basis of $u_1,\ldots,u_{2n}$.
Then $\gamma^*(V^*)=\langle f\rangle$ where $f=\sum_{1\le i\le n} u_{2i-1}^* \wedge u_{2i}^*$.

\bigskip
When $n=1$, it is easy to see that $K^3=\{0\}$ and $S^3=\wedge^3
U$. Thus $S_{dec}^3=S^3$. From now on, we assume that $n \ge 2$.

\bigskip
Step 1.
We will determine $K^3=\gamma^*(V^*)\wedge U^*=\langle f\wedge u_j^*:1\le j\le 2n\rangle$.
We will show that $f\wedge u_1^*, f\wedge u_2^*,\ldots,f\wedge u_{2n}^*$ are linearly independent in $\wedge^3 U^*$.
Thus $\dim_{\bm{F}_p} K^3=2n$.

Note that $f\wedge u_j^*=\sum_{1\le i\le n} u_{2i-1}^*\wedge
u_{2i}^*\wedge u_j^*$. On the other hand, $u_{2l-1}^*\wedge
u_{2l}^*\wedge u_j^*=0$ if $j=2l-1$ or $2l$. Hence $f\wedge u_1^*$
has a term with non-vanishing coefficient, e.g.\ $u_3^*\wedge
u_4^*\wedge u_1^*$, which doesn't appear in $f\wedge u_j^*$ if
$j\ge 2$. Similar facts are valid for other $f\wedge u_j^*$. Hence
these $f\wedge u_j^*$ (where $1\le j\le 2n$) are linearly
independent.

\bigskip
Step 2. We will find linearly independent elements in $S^3$ whose
total number is $\dim_{\bm{F}_p} (\wedge^3 U^*)-2n=\dim_{\bm{F}_p}
S^3$. Hence they form a basis of $S^3$.

Note that, if $w:=u_1\wedge u_3\wedge u_5$, $u_1\wedge u_3\wedge
u_6$, $u_1\wedge u_2\wedge u_5$, $u_2\wedge u_3\wedge u_6$, etc.,
then $\langle \langle w,f\wedge u_j^* \rangle \rangle =0$ in the
pairing defined in Definition \ref{d2.1}, because $\langle
 \langle u_1\wedge u_3\wedge u_5,u_{2i-1}^*\wedge u_{2i}^*\wedge
u_j^* \rangle \rangle =0$ where $u_{2i-1}^*\wedge u_{2i}^*\wedge
u_j^*$ is a standard term of $f\wedge u_j^*$. Thus $w\in S^3$.

In the general case, let $\roof{x}$ be the roof of a real number
$x$: If $n-1<x\le n$ for some integer $n$, then $\roof{x}=n$.
Define
\[
A=\left\{u_i\wedge u_j\wedge u_k: 1\le \roof{(\tfrac{i}{2})} < \roof{(\tfrac{j}{2})} < \roof{(\tfrac{k}{2})} \le n\right\}.
\]

Clearly $A\subset S^3$ and $|A|=2^3\cdot \binom{n}{3}$.

\bigskip
Step 3. We turn to other kinds of vectors of $S^3$.

Suppose that $w:=u_1\wedge (u_3\wedge u_4-u_5\wedge u_6)$,
$u_2\wedge (u_3\wedge u_4-u_5\wedge u_6)$, $u_3\wedge (u_1\wedge
u_2-u_5\wedge u_6)$, etc., then $\langle \langle u_1\wedge
(u_3\wedge u_4-u_5\wedge u_6)$, $f\wedge u_j^* \rangle \rangle =0$
for all $1\le j\le 2n$. Thus $w\in S^3$. In the general case,
define
\[
B=\{u_i\wedge (u_{2j-1} \wedge u_{2j}-u_{2k-1}\wedge u_{2k}):
\roof{(\tfrac{i}{2})},j,k\mbox{\, are distinct integers}\}.
\]
Then $B\subset S^3$.

Let $W$ be the vector space over $\bm{F}_p$ generated by elements of $B$.
It is not difficult to show that $W$ is also generated by elements of $B_1\cup B_2$ where $B_1$ and $B_2$ are defined by
\begin{align*}
B_1 &= \left\{ u_{2i-1}\wedge \left(\sum_{j\ne i} a_j u_{2j-1}\wedge u_{2j}\right): \sum_{j\ne i} a_j=0\right\}, \\
B_2 &= \left\{u_{2i} \wedge \left(\sum_{j\ne i} b_j u_{2j-1}\wedge u_{2j}\right): \sum_{j\ne i} b_j=0\right\}.
\end{align*}

Elements in $B_1\cup B_2$ are linearly independent over $\bm{F}_p$ and $|B_1\cup B_2|=2n(n-2)$.

It is trivial to check that $\dim_{\bm{F}_p} (\wedge^3 U^*)-2n=2^3\cdot \binom{n}{3}+2n(n-2)$.
We find that elements in $A\cup B_1 \cup B_2$ form a basis of $S^3$.

Obviously, elements in $A\cup B_1\cup B_2$ are of the form $u'\wedge u$ for some $u'\in \wedge^2 U$, $u\in U$.
Thus they belong to $S_{\dec}^3$.
We conclude that $S^3=S_{\dec}^3$.
\end{proof}

\begin{remark}
When $p$ is an odd prime number and $G$ is an extraspecial group
of order $p^{2n+1}$ with $n\ge 2$, we don't know whether
$\bm{C}(G)$ is $\bm{C}$-rational; nor do we know whether
$K_{\max}^3/K^3=H_{\rm nr}^3(\bm{C}(G),\bm{Q}/\bm{Z})$. Similarly,
we don't know the answers to the same questions when $G$ is an
extraspecial group of order $2^{2n+1}$ with $n\ge 3$. As to the
situation when $G$ is an extraspecial group of order $2^5$, it is
known that $\bm{C}(G)$ is $\bm{C}$-rational \cite{CHKP}; thus
$\br_{\rm nr}(\bm{C}(G))=H_{\rm nr}^d
(\bm{C}(G),\bm{Q}/\bm{Z})=\{0\}$ for $d\ge 3$.
\end{remark}

In \cite[page 267]{Pe1}, some algebraic variety $W$ is found such
that ${\rm Br}_{\rm nr}(\bm{C}(W))=\{0 \}$ and $H_{\rm
nr}^4(\bm{C}(W), \bm{Q}/\bm{Z}) \neq \{0\}$, but it is unknown
whether $H_{\rm nr}^3(\bm{C}(W), \bm{Q}/\bm{Z})$ is trivial or
not. The following two propositions provide examples with
analogous phenomena.

\begin{prop} \label{p3.2}
Let $p$ be an odd prime number. Let $G$ be the $p$-group of
exponent $p$ defined by $G=\langle v_i,u_j:1\le i\le 3, 1\le j\le
4\rangle$ satisfying the following conditions

$(1)$ $Z(G)=[G,G]=\langle v_1,v_2,v_3\rangle$, and

$(2)$ $[u_1,u_2]=v_1$, $[u_1,u_3]=[u_2,u_4]=v_2$, $[u_1,u_4]=v_3$,
and the other unlisted commutators are equal to the identity
element of $G$.

Then $\dim_{\bm{F}_p} K_{\max}^2/K^2=1$, ${\rm Br}_{\rm
nr}(\bm{C}(G))\ne \{0\}$, $K^3=K_{\max}^3$, but we don't know
whether $H_{\rm nr}^3(\bm{C}(W), \bm{Q}/\bm{Z})$ is trivial or
not.
\end{prop}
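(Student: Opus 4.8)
The plan is to follow the template of Theorem~\ref{t2.4} and Theorem~\ref{t2.6}, since this $p$-group is again a central extension $0\to V\xrightarrow{\iota} G\xrightarrow{\pi} U\to 0$ with $V=\langle v_1,v_2,v_3\rangle$ and $U=\langle u_1,u_2,u_3,u_4\rangle$ of much smaller dimension, so all the relevant exterior powers are low-dimensional and the computations are entirely explicit. By Theorem~\ref{t2.3} it suffices to compute $K^2, S^2, S^2_{\dec}$ in $\wedge^2 U$ (a $6$-dimensional space) and $K^3, S^3, S^3_{\dec}$ in $\wedge^3 U$ (a $4$-dimensional space), and then read off $K^2_{\max}/K^2$ and $K^3_{\max}/K^3$ by duality.

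First I would write down $\gamma:\wedge^2 U\to V$ from condition~(2): $\gamma(u_1\wedge u_2)=v_1$, $\gamma(u_1\wedge u_3)=\gamma(u_2\wedge u_4)=v_2$, $\gamma(u_1\wedge u_4)=v_3$, and $\gamma$ vanishes on the remaining basis bivectors $u_2\wedge u_3$, $u_3\wedge u_4$. Dualizing, $\gamma^*(v_1^*)=u_1^*\wedge u_2^*$, $\gamma^*(v_2^*)=u_1^*\wedge u_3^*+u_2^*\wedge u_4^*$, $\gamma^*(v_3^*)=u_1^*\wedge u_4^*$, so $K^2=\gamma^*(V^*)$ is $3$-dimensional and hence $S^2=(K^2)^\perp$ is $3$-dimensional; one checks directly that $\{u_2\wedge u_3,\ u_3\wedge u_4,\ u_1\wedge u_3-u_2\wedge u_4\}$ spans $S^2$. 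The first two are already decomposable, and for the third one uses a relation of the form $(u_1-u_4)\wedge(u_2+u_3)$ or similar to exhibit it (up to adding elements already known to lie in $S^2_{\dec}$) as a combination of decomposables in $S^2$, giving $S^2_{\dec}=S^2$ — wait, but the statement asserts $\dim K^2_{\max}/K^2=1$, i.e.\ $S^2_{\dec}\subsetneq S^2$ with codimension $1$. So the actual point is to verify carefully that $u_1\wedge u_3-u_2\wedge u_4$ is \emph{not} in the span of decomposable elements of $S^2$: any decomposable $u\wedge u'\in S^2$ must satisfy the Plücker-type constraints from $K^2$, and a short case analysis on $u,u'\in U$ shows the only decomposables in $S^2$ are multiples of $u_2\wedge u_3$, $u_3\wedge u_4$ and $u_3\wedge(au_1+bu_2+cu_4)$-type vectors, whose span is only $2$-dimensional. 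Hence $\dim_{\bm{F}_p} K^2_{\max}/K^2=1$ and $\br_{\rm nr}(\bm{C}(G))\neq\{0\}$.

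For the degree-three part, I would compute $K^3=\gamma^*(V^*)\wedge U^*$, spanned by the $9$ vectors $\gamma^*(v_i^*)\wedge u_j^*$; since $\wedge^3 U^*$ is only $4$-dimensional, I expect these to span all of $\wedge^3 U^*$ (indeed $u_1^*\wedge u_2^*\wedge u_3^*$, $u_1^*\wedge u_2^*\wedge u_4^*$, $u_1^*\wedge u_3^*\wedge u_4^*$ appear directly, and $u_2^*\wedge u_3^*\wedge u_4^*$ comes from $\gamma^*(v_2^*)\wedge u_3^*=u_2^*\wedge u_4^*\wedge u_3^*$, up to sign). Therefore $K^3=\wedge^3 U^*$, so $S^3=\{0\}$, hence $S^3_{\dec}=\{0\}$, and by duality $K^3_{\max}=(S^3_{\dec})^\perp=\wedge^3 U^*=K^3$; that is, $K^3_{\max}/K^3=\{0\}$. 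This gives the stated $K^3=K^3_{\max}$. The final clause — that we do not know whether $H^3_{\rm nr}(\bm{C}(G),\bm{Q}/\bm{Z})$ vanishes — is not something to be proved: Theorem~\ref{t2.3} only tells us $K^3_{\max}/K^3$ is a \emph{subgroup} of $H^3_{\rm nr}$, so its vanishing leaves the full group undetermined, and I would simply remark this.

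The only genuine obstacle is Step~2, the verification that $u_1\wedge u_3-u_2\wedge u_4\notin S^2_{\dec}$: one must be sure the enumeration of decomposable bivectors lying in $S^2$ is exhaustive. Concretely, a general decomposable bivector is $x\wedge y$ with $x,y\in U$; imposing $\langle\langle x\wedge y, f\rangle\rangle=0$ for the three generators $f$ of $K^2$ gives three bilinear equations in the coordinates of $x,y$, and after using the $GL$-action to normalize one can check the solution set is the affine cone over a surface whose decomposables span exactly $\langle u_2\wedge u_3, u_3\wedge u_4\rangle$ (dimension $2$), not the full $3$-dimensional $S^2$. Everything else is immediate from dimension counts, as above.
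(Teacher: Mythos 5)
Your proposal is correct and follows essentially the same route as the paper: compute $K^2=\langle u_1^*\wedge u_2^*,\,u_1^*\wedge u_3^*+u_2^*\wedge u_4^*,\,u_1^*\wedge u_4^*\rangle$, identify $S^2=\langle u_2\wedge u_3,\,u_3\wedge u_4,\,u_1\wedge u_3-u_2\wedge u_4\rangle$, show the decomposable elements of $S^2$ span only $\langle u_2\wedge u_3,\,u_3\wedge u_4\rangle$ (the paper does this via Lemma \ref{l2.5}; your Pl\"ucker-relation argument, $p_{12}=p_{14}=0$ and $p_{13}=-p_{24}$ forcing $p_{13}^2=0$, is the same computation), and observe that $K^3=\wedge^3U^*$ so $S^3=\{0\}$ and $K^3_{\max}=K^3$. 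The only blemish is the phrase about ``$u_3\wedge(au_1+bu_2+cu_4)$-type vectors'' lying in $S^2$ --- this holds only for $a=0$, consistent with your own conclusion that the span is $2$-dimensional --- and your self-corrected first guess $S^2_{\dec}=S^2$; neither affects the final argument.
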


\begin{proof}
Adopt the abbreviation $(2,3)=u_2 \wedge u_3$, etc as before. It
is easy to verify that $S^2=\langle w_1, w_2, w_3 \rangle$ where
$w_1=(2,3), w_2=(3,4), w_3=(1,3)-(2,4)$.

Now we compute $S_{\dec}^2$. We will determine the eligible
elements in $S_{\dec}^2$. Apply Lemma \ref{l2.5} and find the
non-trivial solutions of $(\sum_{1\le i\le 3} a_i\cdot w_i)\wedge
(\sum_{1\le j\le 4} b_j\cdot u_j)=0$. It is not difficult to find
that $S_{\dec}^2 =\langle w_1, w_2 \rangle$. By Theorem
\ref{t2.3}, ${\rm Br}_{\rm nr}(\bm{C}(G))\ne \{0\}$.

On the other hand, it is easy to show that $K^3=\wedge^3 U^*$.
Hence $S^3=\{0\}$. It follows that $S_{\dec}^3=\{0\}$ also.
\end{proof}

\begin{prop} \label{p3.3}
Let $p$ be an odd prime number, $a,b \in \bm{F}_p$ such that the
polynomial $X^2 + aX + b \in \bm{F}_p[X]$ is irreducible. Let $G$
be the $p$-group of exponent $p$ defined by $G=\langle
v_i,u_j:1\le i\le 3, 1\le j\le 4\rangle$ satisfying the following
conditions

$(1)$ $Z(G)=[G,G]=\langle v_1,v_2,v_3\rangle$, and

$(2)$ $[u_1,u_2]=v_1$, $[u_1,u_3]=[u_2,u_4]=v_2$, $[u_2,u_3]=v_3$,
$[u_1,u_4]=v_3^{-b}$, $[u_2,u_4]=v_3^{-a}$, and the other unlisted
commutators are equal to the identity element of $G$.

Then $\dim_{\bm{F}_p} K_{\max}^2/K^2=2$, ${\rm Br}_{\rm
nr}(\bm{C}(G))\ne \{0\}$, $K^3=K_{\max}^3$, but we don't know
whether $H_{\rm nr}^3(\bm{C}(W), \bm{Q}/\bm{Z})$ is trivial or
not.
\end{prop}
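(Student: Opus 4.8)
The plan is to mirror the proof of Proposition \ref{p3.2} step by step, replacing the relevant pairs of vectors by their ``twisted'' analogues dictated by the irreducible polynomial $X^2+aX+b$. First I would set up notation exactly as before: let $\{u_j^*\}$ and $\{v_i^*\}$ be the dual bases, compute $\gamma^*:V^*\to\wedge^2U^*$ from the commutator relations in condition $(2)$, and read off the three generators $f_1,f_2,f_3$ of $K^2=\gamma^*(V^*)$. Concretely, $\gamma^*(v_1^*)=u_1^*\wedge u_2^*$, $\gamma^*(v_2^*)=u_1^*\wedge u_3^*+u_2^*\wedge u_4^*$, and $\gamma^*(v_3^*)=u_2^*\wedge u_3^*-b\,u_1^*\wedge u_4^*-a\,u_2^*\wedge u_4^*$. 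Since $\dim_{\bm{F}_p}\wedge^2U=6$ and $\dim_{\bm{F}_p}K^2=3$, the space $S^2=(K^2)^\bot$ is three-dimensional; I would exhibit an explicit basis $w_1,w_2,w_3$ of $S^2$, where two of these will be ``pure'' wedges $u_i\wedge u_j$ that obviously lie in $S^2_{\dec}$, and the third is a genuine combination such as $(1,3)-(2,4)$ together with a correction term involving $(3,4)$ and $(1,4)$ coming from the twist.

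Next I would compute $S^2_{\dec}$ via Lemma \ref{l2.5}. An element $w=\sum a_i w_i\in S^2$ is a decomposable wedge $u'\wedge u$ iff $w\wedge u_0=0$ for some nonzero $u_0=\sum b_j u_j$; expanding this condition in $\wedge^3U$ gives a system of bilinear equations in $(a_1,a_2,a_3,b_1,b_2,b_3,b_4)$. The key point is that the ``mixing'' equations reduce, after eliminating the $b_j$'s, to requiring that a certain quadratic form in $(a_1,a_2,a_3)$ — essentially the norm form attached to $X^2+aX+b$ — vanish; since that polynomial is irreducible over $\bm{F}_p$, the only solutions force the ``twisted'' coordinates to vanish, so the eligible elements span only the two obvious pure wedges. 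Hence $S^2_{\dec}$ is two-dimensional, $S^2_{\dec}\subsetneq S^2$, so by Theorem \ref{t2.3} we get $\dim_{\bm{F}_p}K^2_{\max}/K^2=\dim_{\bm{F}_p}S^2/S^2_{\dec}=1$; wait — the claim is $\dim K^2_{\max}/K^2=2$, so I expect $S^2_{\dec}$ to actually be only \emph{one}-dimensional here (a single pure wedge survives), i.e.\ the irreducibility kills two of the three directions. I would pin down which pure wedges genuinely lie in $S^2$ and verify by the bilinear-system analysis that exactly one decomposable direction remains, giving $\dim_{\bm{F}_p}S^2/S^2_{\dec}=2$ and hence ${\rm Br}_{\rm nr}(\bm{C}(G))\ne\{0\}$.

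For the degree-three statement the argument is short: I would show $K^3=\gamma^*(V^*)\wedge U^*$ already equals all of $\wedge^3U^*$ (the eighteen generators $f_i\wedge u_j^*$, after the usual simplification as in Step 3 of Theorem \ref{t2.4}, span the $\binom{4}{3}=4$-dimensional space $\wedge^3U^*$). Then $S^3=(K^3)^\bot=\{0\}$, so trivially $S^3_{\dec}=\{0\}=S^3$ and $K^3_{\max}=K^3$. This forces $K^3_{\max}/K^3=\{0\}$, which is why Peyre's lower bound gives no information and one genuinely does not know whether $H^3_{\rm nr}(\bm{C}(G),\bm{Q}/\bm{Z})$ vanishes.

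The main obstacle will be the bookkeeping in the $S^2_{\dec}$ computation: one must correctly carry the parameters $a,b$ through the wedge expansion and identify the resulting quadratic constraint as the norm form of $\bm{F}_p[X]/(X^2+aX+b)$, so that irreducibility is exactly what is needed to conclude $\dim_{\bm{F}_p}S^2/S^2_{\dec}=2$ rather than $1$. Getting the dimension count right — in particular checking that precisely one, not two, pure-wedge directions remain in $S^2_{\dec}$ — is the delicate part; everything else is routine linear algebra over $\bm{F}_p$ parallel to the proofs already given.
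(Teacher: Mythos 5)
Your proposal is correct and follows essentially the same route as the paper: one computes $S^2=\langle (3,4),\ (1,3)-(2,4)-a(2,3),\ (1,4)+b(2,3)\rangle$, reduces membership in $S_{\dec}^2$ via the decomposability criterion to the vanishing of the quadratic form $\beta^2-a\beta\gamma+b\gamma^2$ (anisotropic exactly because $X^2+aX+b$ is irreducible), so that $S_{\dec}^2=\langle u_3\wedge u_4\rangle$ and $\dim_{\bm{F}_p}K_{\max}^2/K^2=2$, while $K^3=\wedge^3U^*$ gives $S^3=\{0\}$ and $K_{\max}^3=K^3$. Two small slips to fix: since $\dim_{\bm{F}_p}U=4$ there are twelve (not eighteen) generators $f_i\wedge u_j^*$, and only one (not two) of the natural basis vectors of $S^2$ is a pure wedge --- your later self-correction to $\dim_{\bm{F}_p}S_{\dec}^2=1$ is the right reading.
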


\begin{proof}
The proof is similar to the above Proposition \ref{p3.2}. In the
present situation, $S^2=\langle w_1, w_2, w_3\rangle$ where
$w_1=(3,4), w_2=(1,3)-(2,4)-a(2,3), w_3=(1,4)+b(2,3)$, and
$S_{\dec}^2 =\langle w_1 \rangle$. Thus ${\rm Br}_{\rm
nr}(\bm{C}(G))\ne \{0\}$. As in Proposition \ref{p3.2}, we find
that $K^3=\wedge^3 U^*$. Done.
\end{proof}

\bigskip
Now we are going to explain the reason how the $p$-groups of
Theorem \ref{t2.4}, Theorem \ref{t2.6} and Theorem \ref{t2.7} are
found.

Suppose that the group G is of the form $0\to V
\xrightarrow{\iota} G \xrightarrow{\pi} U\to 0$ as in Section 2
and $K^2=\gamma^*(V^*)$ is a subspace of $\wedge^2 U^*$.

Assume that $\dim_{\bm{F}_p}U=6$. We will find a suitable subspace
of $\wedge^2 U^*$ as $K^2$ so that $S_{\dec}^3 \neq S^3$.

For any $w \in \wedge^3 U$, define $X_w := \{ x \in\wedge^2 U^* :
\langle \langle w, x \wedge y \rangle \rangle =0 \, \, for \,
\,any \, \, y \in U^* \}$. It follows that $w \in S^3$ whenever
$K^2$ is a subspace of $X_w$.

We will choose $w$ such that it is highly probable that $w \notin
S_{\dec}^3$ if $K^2$ is chosen judiciously. For this purpose we
choose $w \in \wedge^3 U \setminus ((\wedge^2 U) \wedge U)$. We
will focus on three kinds of such vectors : $u_1 \wedge u_2 \wedge
u_3 +u_3 \wedge u_4 \wedge u_5+u_5 \wedge u_6 \wedge u_1$, $u_1
\wedge u_2 \wedge u_3 +u_3 \wedge u_4 \wedge u_5+u_5 \wedge u_6
\wedge u_1-tu_2 \wedge u_4 \wedge u_6$ (where $t\in \bm{F}_p
\backslash \{0\}$), and $u_1 \wedge u_2 \wedge u_3 +u_4 \wedge u_5
\wedge u_6$. Remember that $u_1, u_2, \ldots, u_6$ is a basis of
$U$.

\medskip
Case 1. $w=u_1 \wedge u_2 \wedge u_3 +u_3 \wedge u_4 \wedge
u_5+u_5 \wedge u_6 \wedge u_1$.

We adopt the convention in Section 2 that $[1,2]$ denotes $u_1^*
\wedge u_2^*$, etc. It is not difficult to find that $X_w= \langle
[1,2]-[4,5], [2,3]-[5,6], [1,4], [2,5], [3,6], [4,6], [3,4]+[1,6],
[2,4], [2,6] \rangle$.

If we choose $K^2$ to be the subspace generated by the first six
vectors of $X_w$, i.e. $K^2=\langle [1,2]-[4,5], \ldots, [4,6]
\rangle$, then we get the same $K^2$ in Peyre's Theorem 3
\cite[page 223]{Pe2}. Thus we get the group of order $p^{12}$
constructed by Peyre.

If we choose $K^2$ to be a 3-dimensional subspace of $X_w$
generated by the elements $[1,2]-[4,5], [2,3]-[5,6]+[1,4],
[3,6]-[2,4]$, we get the group of order $p^9$ in Theorem
\ref{t2.7}.

If we choose $K^2=X_w$, it is not difficult to verify that
$S^3=\langle w, w' \rangle$ where $w'=u_1 \wedge u_3 \wedge u_5$
and $S_{\dec}^3 =\langle w' \rangle$. Thus we find a group of
order $p^{15}$, which is recorded in Theorem \ref{t3.4}.

\medskip
Case 2. $w=u_1 \wedge u_2 \wedge u_3 +u_4 \wedge u_5 \wedge u_6$.

Then $X_w=\langle [1,4], [1,5], [1,6], [2,4], [2,5], [2,6], [3,4],
[3,5], [3,6] \rangle$. If we take $K^2=X_w$, then $S_3= \langle
u_1 \wedge u_2 \wedge u_3, u_4 \wedge u_5 \wedge u_6 \rangle$. It
can be shown that $S_{\dec}^2=S^2$ and $S_{\dec}^3=S^3$. In
conclusion, the process produces no group harmful.

\medskip
Case 3. $w=u_1 \wedge u_2 \wedge u_3 +u_3 \wedge u_4 \wedge
u_5+u_5 \wedge u_6 \wedge u_1-tu_2 \wedge u_4 \wedge u_6$ where
$t\in \bm{F}_p \backslash \{0\}$.

This $w$ is nothing but the $w_1$ in Step 2 of the proof of
Theorem \ref{t2.6}. Thus we may find the group of order $p^9$ in
Theorem \ref{t2.6} if we choose a suitable subspace of $X_w$.

\medskip
Conceivably we may find other ``counter-examples" if choose
various subspaces of $X_w$ for various vectors $w$.

\medskip
The group in the following theorem is found in the above Case 1.

\begin{theorem} \label{t3.4}
Let $p$ be an odd prime number. Let $G$ be the $p$-group of
exponent $p$ defined by $G=\langle v_i,u_j:1\le i\le 9, 1\le j\le
6\rangle$ satisfying the following conditions

$(1)$ $Z(G)=[G,G]=\langle v_1,v_2,v_3\rangle$, and

$(2)$ $[u_1,u_2]=[u_4,u_5]^{-1}=v_1$,
$[u_2,u_3]=[u_5,u_6]^{-1}=v_2$, $[u_1,u_4]=v_3$, $[u_2,u_5]=v_4$,
$[u_3,u_6]=v_5$, $[u_4,u_6]=v_6$, $[u_3,u_4]=[u_1,u_6]^{-1}=v_7$,
$[u_2,u_4]=v_8$, $[u_2,u_6]=v_9$, and the other unlisted
commutators are equal to the identity element of $G$.

Then ${\rm Br}_{\rm nr}(\bm{C}(G))=\{0\}$ and $K_{\max}^3/K^3\ne
\{0\}$.

\end{theorem}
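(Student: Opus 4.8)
The plan is to follow verbatim the pattern of the proofs of Theorems \ref{t2.4}, \ref{t2.6} and \ref{t2.7}, since by Theorem \ref{t2.3} it suffices to establish the two linear-algebra facts $S^2=S_{\dec}^2$ (which gives $\br_{\rm nr}(\bm{C}(G))=\{0\}$) and $S^3\supsetneq S_{\dec}^3$ (which gives $K_{\max}^3/K^3\ne\{0\}$). Here $U=\langle u_1,\dots,u_6\rangle$, $V=\langle v_1,\dots,v_9\rangle$, so $\dim_{\bm{F}_p}\wedge^2U^*=15$ and $\dim_{\bm{F}_p}\wedge^3U^*=20$, and the defining commutator relations give $\gamma\colon\wedge^2U\to V$ explicitly; dualizing, $K^2=\gamma^*(V^*)$ is the $9$-dimensional subspace of $\wedge^2U^*$ spanned by $u_1^*\wedge u_2^*-u_4^*\wedge u_5^*$, $u_2^*\wedge u_3^*-u_5^*\wedge u_6^*$, $u_1^*\wedge u_4^*$, $u_2^*\wedge u_5^*$, $u_3^*\wedge u_6^*$, $u_4^*\wedge u_6^*$, $u_3^*\wedge u_4^*-u_1^*\wedge u_6^*$, $u_2^*\wedge u_4^*$, $u_2^*\wedge u_6^*$. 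By the discussion in Case 1 this is exactly $X_w$ for $w=u_1\wedge u_2\wedge u_3+u_3\wedge u_4\wedge u_5+u_5\wedge u_6\wedge u_1$, and the excerpt already asserts there that $S^3=\langle w,w'\rangle$ with $w'=u_1\wedge u_3\wedge u_5$ and $S_{\dec}^3=\langle w'\rangle$; the task is to supply the computations behind those assertions.

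For the degree-two part I would write down $S^2=(K^2)^\bot$, which has dimension $15-9=6$, and exhibit six linearly independent decomposable (or manifestly $S_{\dec}^2$) elements in it. Using the abbreviation $(i,j)=u_i\wedge u_j$, the coordinates $(1,3),(1,5),(3,5)$ are visibly in $S^2$ and are already decomposable; the remaining three basis vectors of $S^2$ will be combinations such as $(1,2)+(4,5)$, $(2,3)+(5,6)$, $(3,4)+(1,6)$, and for each of these I would produce, exactly as in Step 1 of the proof of Theorem \ref{t2.6}, an identity of the form $(u_a+\alpha u_c)\wedge(u_b+\beta u_d)=[\text{exceptional vector}]+[\text{already-decomposable terms}]$ showing the exceptional vector lies in $S_{\dec}^2$. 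This yields $S^2=S_{\dec}^2$ and hence $\br_{\rm nr}(\bm{C}(G))=\{0\}$.

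For the degree-three part I would first verify $\dim_{\bm{F}_p}K^3=18$ by listing the $18$ generators $f_i\wedge u_j^*$, simplifying them against the coordinate monomials exactly as in Step 3 of the proof of Theorem \ref{t2.4}, and checking linear independence; this gives $\dim_{\bm{F}_p}S^3=20-18=2$. Then I would check directly that both $w$ and $w'$ pair to zero against every $f_i\wedge u_j^*$, so $S^3=\langle w,w'\rangle$. Finally, to compute $S_{\dec}^3$ I would invoke Lemma \ref{l2.5}: an eligible element $a_1w+a_2w'$ must satisfy $(a_1w+a_2w')\wedge u_0=0$ for some nonzero $u_0=\sum b_ju_j$; expanding this wedge product into $\wedge^4U$ and solving the resulting linear system in $(a_1,a_2,b_1,\dots,b_6)$, one finds the only solutions with $a_1w+a_2w'\ne0$ force $a_1=0$, so the only eligible elements are scalar multiples of $w'$, whence $S_{\dec}^3=\langle w'\rangle\subsetneq S^3$ and $\dim_{\bm{F}_p}K_{\max}^3/K^3=1$.

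The main obstacle is purely computational: the identity $K^2=X_w$ and the claim $S^3=\langle w,w'\rangle$ rest on the bilinear pairing of Definition \ref{d2.1}, and one must be careful with signs when expanding $f_i\wedge u_j^*$ and when checking orthogonality, since the determinant-style pairing introduces permutation signs; likewise the case analysis of the wedge-product equation $(a_1w+a_2w')\wedge u_0=0$ has several branches (the ``$a_1\ne0$'' branch, the ``$a_2\ne0,b$ arbitrary'' branch, and mixed branches) that must all be shown to collapse to $a_1=0$. None of this is conceptually hard — it is the same bookkeeping carried out in Theorems \ref{t2.4} and \ref{t2.6} — but it is the only place where an error could slip in, so I would organize it as the four explicit steps above and double-check the $18\times20$ incidence pattern.
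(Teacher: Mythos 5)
Your overall plan --- reduce via Theorem \ref{t2.3} to the two statements $S^2=S^2_{\dec}$ and $S^3_{\dec}\subsetneq S^3$ and then compute --- is exactly what the paper intends (it offers no proof of Theorem \ref{t3.4} beyond the Case 1 sketch), but both computations you defer fail when actually carried out, so this is a genuine gap rather than a routine omission. For the degree-two part: $\dim_{\bm{F}_p}K^2=9$, so $S^2$ is only $6$-dimensional, spanned by $(1,3)$, $(1,5)$, $(3,5)$, $(1,2)+(4,5)$, $(2,3)+(5,6)$, $(3,4)\mp(1,6)$. If $\omega\in S^2$ has coefficient $d$ on $(1,2)+(4,5)$, then the coefficient of $u_1\wedge u_2\wedge u_4\wedge u_5$ in $\omega\wedge\omega$ is $2d^2$, because the only other pairs that could contribute, $\{(1,4),(2,5)\}$ and $\{(1,5),(2,4)\}$, are killed by $u_1^*\wedge u_4^*,\ u_2^*\wedge u_5^*,\ u_2^*\wedge u_4^*\in K^2$; hence $\omega$ is decomposable only if $d=0$, and the analogous computations with $u_2\wedge u_3\wedge u_5\wedge u_6$ and $u_1\wedge u_3\wedge u_4\wedge u_6$ force the other two exceptional coefficients to vanish as well. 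So $S^2_{\dec}=\wedge^2\langle u_1,u_3,u_5\rangle$ has dimension $3$, the identities you promise in the style of Step 1 of Theorem \ref{t2.6} do not exist, and the method yields $\dim_{\bm{F}_p}K^2_{\max}/K^2=3$, i.e.\ $\br_{\rm nr}(\bm{C}(G))\ne\{0\}$ --- the opposite of what you (and the theorem as stated) assert.

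The degree-three part has a separate problem: a sign. Reading the relations literally, $[u_3,u_4]=[u_1,u_6]^{-1}=v_7$ gives $\gamma^*(v_7^*)=u_3^*\wedge u_4^*-u_1^*\wedge u_6^*$, which is what you list, whereas $X_w$ in Case 1 contains $u_3^*\wedge u_4^*+u_1^*\wedge u_6^*$; these generate different subspaces, so your identification ``this is exactly $X_w$'' is false. With the minus sign one has $\langle\langle w,(u_3^*\wedge u_4^*-u_1^*\wedge u_6^*)\wedge u_5^*\rangle\rangle=2\ne0$, so $w\notin S^3$; solving the full orthogonality system then gives $S^3=\langle u_1\wedge u_3\wedge u_5\rangle=S^3_{\dec}$ and hence $K^3_{\max}/K^3=\{0\}$. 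Your planned verification that ``$w$ and $w'$ pair to zero against every $f_i\wedge u_j^*$'' therefore cannot succeed for the $K^2$ you wrote down; the degree-three conclusion requires the relation $[u_3,u_4]=[u_1,u_6]=v_7$, and with that correction one does get $S^3=\langle w,w'\rangle$ and $S^3_{\dec}=\langle w'\rangle$ as in Case 1. A smaller slip: there are $9\times6=54$ generators $f_i\wedge u_j^*$, not $18$; it is $\dim_{\bm{F}_p}K^3$ that equals $18$.
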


\bigskip

\bigskip
\renewcommand{\refname}{\centering{References}}

\end{document}